\newtheorem{theorem}{Theorem}[section]
\newtheorem{lemma}{Lemma}[section]
\newtheorem{definition}{Definition}[section]
\newtheorem{proposition}{Proposition}[section]
\newtheorem{remark}{Remark}[section]
\newtheorem{corollary}{Corollary}[section]
\newcommand{\R}{\mathbb{R}}
\newcommand{\Z}{\mathbb{Z}}
\newcommand{\C}{\mathbb{C}}
\newcommand{\Q}{\mathbb{Q}}
\newcommand{\HH}{\mathbb{H}}
\newcommand{\F}{\mathcal{F}}
\newcommand{\OO}{\mathcal{O}}
\newcommand{\Nc}{\mathcal{N}}
\renewcommand{\Mc}{\mathcal{M}}
\newcommand{\Zc}{\mathcal{Z}}
\newcommand{\Vc}{\mathcal{V}}
\newcommand{\J}{\mathcal{J}}
\newcommand{\rvline}
{\hspace*{-\arraycolsep}
\vline\hspace*{-\arraycolsep}}
\begin{document}

\title{A generalization of Inoue surfaces $S^+$}
\author{David Petcu}
\address{
		\textsc{\indent University of Bucharest, Faculty of Mathematics and Computer Science\newline 
			\indent 14 Academiei Str., Bucharest, Romania \newline
			\indent \indent and \newline
			\indent Institute of Mathematics ``Simion Stoilow'' of the Romanian Academy\newline 
			\indent 21 Calea Grivitei Street, 010702, Bucharest, Romania}}

\email{david.petcu@my.fmi.unibuc.ro}
\subjclass[2000]{53C55, 22E25, 32J18}
\keywords{Solvmanifolds,  non-Kähler manifolds}
\date{}

\begin{abstract}
Using Lie groups with left-invariant complex structure, we construct new examples of compact complex manifolds with flat affine structure in arbitrarly high dimensions. In the 2-dimensional case, we retrieve the Inoue surfaces $S^+$.
\end{abstract}

\maketitle

\section{Introduction}

The surfaces introduced by Inoue in 1974 are complex, compact, non-Kähler manifolds. Alongside Hopf surfaces, Kodaira surfaces, and tori, they serve as examples of surfaces that possess a natural flat affine structure. This means that there exists a covering of the manifold with coordinate charts such that all the transition maps between these charts are affine functions.\par
To construct manifolds with this property, one can begin with a real Lie group that is endowed with a left-invariant complex structure \cite{wall}. By taking the quotient of this Lie group by the left action of a discrete cocompact subgroup, one obtains the desired manifold. In this paper, we utilize this approach to introduce a higher-dimensional generalization of Inoue surfaces of type $S^+$.\par
There have been several constructions proposed to date that generalize different types of Inoue surfaces. For instance, Oeljeklaus-Toma (\cite{OeTo})  and Endo-Pajitnov (\cite{EPaj}) manifolds generalize $S^0$ surfaces, while Oeljeklaus-Miebach manifolds (\cite{OeMi}) generalize $S^+$ surfaces. Despite the differences in their specific constructions, all these manifolds share the fundamental property of admitting an affine structure, as defined above.\par
By expanding on Inoue's original work, our paper aims to further explore the geometric and topological properties of these higher-dimensional analogues. The main tool we use to achieve this is group cohomology. 
 Specifically, we use it to prove that the proposed manifolds do not contain hypersurfaces. 
We also investigate the natural generalization of the left-invariant metric structure that exists on the $S^+$ surfaces.

\hfill

{\bf Acknowledgements.} The author would like to thank the anonymous referee for valuable suggestions for improving the text and for corrections to the first version. We also thank Victor Vuletescu for pointing the problem and Liviu Ornea for useful remarks. The present work is partly supported by the PNRR-III-C9-2023-I8 grant CF 149/31.07.2023 {\em Conformal Aspects of Geometry and Dynamics}. 

\section{Construction}

Let $d=2m+1$ be a positive integer and let $G$ be the Lie group:

\begin{equation}\label{elgen}
G=\left\{
\begin{pmatrix}
I_d & \rvline & \begin{matrix} b_d \\ \vdots \\ b_1 \end{matrix} & \rvline & 
\begin{matrix}
c_{1d} & \dots & c_{dd}\\
\vdots & \ddots & \vdots\\
c_{11} & \dots & c_{d1}\\
\end{matrix}\\
\hline
0 & \rvline & \alpha & \rvline & 
\begin{matrix}
a_1 & \dots & a_d
\end{matrix}\\
\hline
0 & \rvline & 0 & \rvline& I_d
\end{pmatrix}
\vert  \ \alpha \in \R_{>}; a_i, b_j, c_{ij} \in \R
\right\}
\end{equation}
Let  $n=\frac{(d+1)^2}{2}-1$, there is a diffeomorphism from $G$ to $\HH \times \C^n$ (as real manifolds):
$$\begin{pmatrix}
I_d & \rvline & \begin{matrix} y_{d0} \\ \vdots \\ y_{10} \end{matrix} & \rvline & 
\begin{matrix}
x_{d0} & y_{d1} & x_{d1} & \dots & y_{dm} & x_{dm}\\
\vdots & \vdots & \vdots & \ & \vdots & \vdots\\
x_{10} & y_{11} & x_{11} & \dots & y_{1m} & x_{1m}\\
\end{matrix}\\
\hline
0 & \rvline & w_2 & \rvline & 
\begin{matrix}
w_1 & y_{01} & x_{01} & \dots & y_{0m} & x_{0m}
\end{matrix}\\
\hline
0 & \rvline & 0 & \rvline& I_d
\end{pmatrix} \to (w_1+iw_2, x_{ij}+iy_{ij})\text{.}$$
 Thus, $G$ can be seen as a complex manifold. Moreover, the left actions by elements of $G$ are holomorphic with respect to this structure. \par

The largest nilpotent subgroup of $G$ is
$$\Nc=\left\{
\begin{pmatrix}
I_d & \rvline & (b_j) & \rvline & (c_{ij})\\
\hline
0 & \rvline & 1 & \rvline & (a_i) \\
\hline
0 & \rvline & 0 & \rvline& I_d
\end{pmatrix} \vert \ a_i, b_j, c_{ij} \in \R
\right\}$$
This gives the short (split) exact sequence of groups:
\begin{equation}\label{sir11}
\begin{tikzcd}
	1 & \Nc & G & {\R_{>}} & 1
	\arrow[from=1-1, to=1-2]
	\arrow[from=1-2, to=1-3]
	\arrow[from=1-3, to=1-4]
	\arrow[from=1-4, to=1-5]
\end{tikzcd}
\end{equation}
Denote by $\Zc$ the first derived group of $\Nc$ (i.e. $\Zc=[\Nc,\Nc]$):
$$\Zc=\left\{
\begin{pmatrix}
I_d & \rvline & 0 & \rvline & (c_{ij})\\
\hline
0 & \rvline & 1 & \rvline & 0 \\
\hline
0 & \rvline & 0 & \rvline& I_d
\end{pmatrix} \vert \ c_{ij} \in \R
\right\}.$$
This subgroup is isomorphic to $\R^{d^2}$.

The following Lemma is immediate, but we insert its statement here as it plays an important role in the sequel:
\begin{lemma}
The subgroup $\Zc$ is the center of $G$.
\end{lemma}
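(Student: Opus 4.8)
The plan is a direct computation with the block form of the group law. Writing a general element of $G$ as
$$g=\begin{pmatrix} I_d & b & c\\ 0 & \alpha & a\\ 0 & 0 & I_d\end{pmatrix},\qquad b\in\R^d\ (\text{column}),\ \ a\in\R^d\ (\text{row}),\ \ c\in\R^{d\times d},\ \ \alpha\in\R_{>0},$$
block multiplication gives
$$g\cdot g'=\begin{pmatrix} I_d & \alpha' b+b' & c+c'+ba'\\ 0 & \alpha\alpha' & a+\alpha a'\\ 0 & 0 & I_d\end{pmatrix}.$$
Both inclusions $\Zc\subseteq Z(G)$ and $Z(G)\subseteq\Zc$ then follow by reading off the four non-trivial blocks of this formula.

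For $\Zc\subseteq Z(G)$: an element $z\in\Zc$ has $b=0$, $a=0$, $\alpha=1$ and $c$-block equal to some $\gamma\in\R^{d\times d}$. Substituting into the group law, the products $gz$ and $zg$ have equal $(1,2)$-, $(2,2)$- and $(2,3)$-blocks (namely $b$, $\alpha$ and $a$), while their $(1,3)$-blocks are $c+\gamma$ and $\gamma+c$ respectively; the cross terms $b\,a'$ and $b'\,a$ drop out because the relevant row or column is zero. Hence $gz=zg$ for every $g\in G$.

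For $Z(G)\subseteq\Zc$: suppose $g=(b,\alpha,a,c)\in Z(G)$ and let $h=(b',\alpha',a',c')$ range over $G$. Comparing the $(1,2)$-blocks of $gh$ and $hg$ gives $\alpha'b+b'=\alpha b'+b$, i.e. $(\alpha'-1)b=(\alpha-1)b'$ for all $\alpha'\in\R_{>0}$ and all $b'\in\R^d$; fixing some $\alpha'\neq 1$ and varying $b'$ forces $\alpha=1$, and then the equation forces $b=0$. Symmetrically, the $(2,3)$-blocks give $(\alpha-1)a'=(\alpha'-1)a$, which together with $\alpha=1$ forces $a=0$. Finally the $(1,3)$-blocks give $ba'=b'a$, which is now automatically satisfied. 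Thus $g$ has $b=0$, $a=0$, $\alpha=1$, i.e. $g\in\Zc$, and so $Z(G)=\Zc$.

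There is no real obstacle here — the argument is pure bookkeeping. The only points requiring a little care are matching the (reversed) indexing of the entries used in the chart with the plain block notation above, and observing that the $\R_{>0}$-factor is exactly what makes the center no larger than $\Zc$: a single test element $h$ with $\alpha'\neq 1$ already pins down $\alpha$, $b$ and $a$ simultaneously.
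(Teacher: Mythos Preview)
Your proof is correct and is essentially the same direct block-multiplication verification that the paper has in mind; the paper itself calls the lemma ``immediate'' and omits a proof in the body, and the computation you give matches the one the author recorded (multiplying two general elements and reading off the blocks). There is nothing to add.
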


 We get another short exact sequence of groups
\begin{equation}\label{sir2}
\begin{tikzcd}
	1 & \Zc & \Nc & \Vc & 1
	\arrow[from=1-1, to=1-2]
	\arrow[from=1-2, to=1-3]
	\arrow["\pi", from=1-3, to=1-4]
	\arrow[from=1-4, to=1-5]
\end{tikzcd}
\end{equation}
where the quotient Lie group $\Vc$ is isomorphic to $\R^{2d}.$ \par
Next, we will search for a discrete cocompact subgroup $\Lambda$ of $G$. To do this we will need the folllowing result, which is easy to prove:

\begin{lemma} \label{disc}
Let 
\[\begin{tikzcd}
	1 & {\mathcal{L}} & {\mathcal{M}} & {\mathcal{R}} & 1
	\arrow[from=1-1, to=1-2]
	\arrow["i", from=1-2, to=1-3]
	\arrow["\pi", from=1-3, to=1-4]
	\arrow[from=1-4, to=1-5]
\end{tikzcd}\]
be a short exact sequence of Lie groups such that $i$ is a smooth embedding. Let $\Lambda_{\Mc}$ be a subgroup of $\Mc$ and denote by $\Lambda_{\mathcal{L}}$ the intersection $\Lambda_{\Mc} \cap \mathcal{L}$ and by $\Lambda_{\mathcal{R}}$ the image $\pi(\Lambda_{\Mc}) \subset \mathcal{R}$. If $\Lambda_{\mathcal{L}}$ is a discrete subgroup of $\mathcal{L}$ and $\Lambda_{\mathcal{R}}$ is a discrete subgroup of $\mathcal{R}$, then $\Lambda_{\Mc}$ is a discrete subgroup of $\Mc$.
\end{lemma}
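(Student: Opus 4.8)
The plan is to reduce discreteness of $\Lambda_{\Mc}$ to the standard fact that a subgroup of a topological group is discrete if and only if its identity element is isolated, i.e. if and only if there is an open neighborhood $U$ of $e$ in the ambient group with $U\cap\Lambda_{\Mc}=\{e\}$. So the whole task is to construct such a $U$ inside $\Mc$, and I would build it in two stages, first cutting down with information coming from $\mathcal R$ and then with information coming from $\mathcal L$.

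First I would use discreteness of $\Lambda_{\mathcal R}$: choose an open neighborhood $W$ of $e$ in $\mathcal R$ with $W\cap\Lambda_{\mathcal R}=\{e\}$ and set $V=\pi^{-1}(W)$, which is open in $\Mc$ since $\pi$ is continuous. If $\lambda\in V\cap\Lambda_{\Mc}$ then $\pi(\lambda)\in W\cap\Lambda_{\mathcal R}=\{e\}$, hence $\lambda\in\Ker\pi=i(\mathcal L)$; identifying $\mathcal L$ with its image under the embedding $i$, this says $\lambda\in\Lambda_{\Mc}\cap\mathcal L=\Lambda_{\mathcal L}$. Thus the single neighborhood $V$ already forces every element of $\Lambda_{\Mc}$ lying in it to belong to $\Lambda_{\mathcal L}$.

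Next I would bring in discreteness of $\Lambda_{\mathcal L}$ together with the embedding hypothesis. Pick a neighborhood $U_0$ of $e$ in $\mathcal L$ with $U_0\cap\Lambda_{\mathcal L}=\{e\}$. Because $i$ is a smooth embedding, $i(\mathcal L)$ carries the subspace topology inherited from $\Mc$, so there is an open set $U_1\subseteq\Mc$ with $U_0=U_1\cap\mathcal L$. Then $U:=V\cap U_1$ is an open neighborhood of $e$ in $\Mc$, and for $\lambda\in U\cap\Lambda_{\Mc}$ the previous paragraph gives $\lambda\in\Lambda_{\mathcal L}$, while $\lambda\in U_1\cap\mathcal L=U_0$; hence $\lambda\in U_0\cap\Lambda_{\mathcal L}=\{e\}$. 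Therefore $U\cap\Lambda_{\Mc}=\{e\}$ and $\Lambda_{\Mc}$ is discrete.

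The one point that genuinely needs care — and the only place the hypothesis on $i$ enters — is the passage $U_0=U_1\cap\mathcal L$ in the last step: it is precisely the fact that $i$ is an embedding, not merely an injective immersion, that makes the intrinsic topology of $\mathcal L$ coincide with its subspace topology in $\Mc$, so that "discrete in $\mathcal L$" can be witnessed by an open subset of $\Mc$. Without this the statement is false (think of a line of irrational slope inside a torus), so there is no way to bypass it; everything else is routine topological-group bookkeeping and I would keep it to a few lines.
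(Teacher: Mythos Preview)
Your proof is correct and follows essentially the same route as the paper's argument: reduce to isolating the identity, use $\pi^{-1}(W)$ to force any nearby element of $\Lambda_{\Mc}$ into $\Lambda_{\mathcal L}$, and then use the embedding hypothesis to produce an open set of $\Mc$ whose trace on $\mathcal L$ is the small neighborhood $U_0$. The only cosmetic difference is that the paper phrases the last step via a tubular neighborhood of $\mathcal L$ in $\Mc$, whereas you invoke directly that an embedding carries the subspace topology; your formulation is slightly more elementary and equally valid.
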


 Henceforth, in order to find the subgroup $\Lambda$, we look for a cocompact discrete subgroup $\Lambda_\Nc \subset \Nc$ and a cocompact discrete subgroup $\langle g_0 \rangle$ of $\R_>$ (where $\R_>$ is seen as a subgroup of $G$ via the natural splitting of \ref{sir11}) such that $g_0\Lambda_\Nc g_0^{-1}=\Lambda_\Nc$. Then, the group $\Lambda:=\langle \Lambda_\Nc, g_0 \rangle$ will be  cocompact and discrete in $G$, as desired. Indeed, we get discreteness by applying lemma (\ref{disc}) while for cocompactness we observe that the action $g_0$ descends to an action on the quotient $G/\Lambda_{\Nc}$, which is difeomorphic to a fiber bundle over $\R_>$ with fibers difeomorphic to $\Nc/\Lambda_\Nc$. Then, $G/\Lambda_\Nc$ is difeomorphic to a fiber bundle with fiber $\Nc/\Lambda_\Nc$ over the base $\R_>/\langle g_0 \rangle$.\par

To produce a cocompact lattice $\Lambda_\Nc \subset \Nc$ of $\Nc$, first notice that $D\coloneqq \Lambda_\Nc \cap \Zc$ must be a cocompact subgroup of $\Zc$ (cf. e.g. \cite{rag}, Prop 2.17). Assume $\Lambda_\Nc$ is such a subgroup, let $h_1 \dots h_{d^2}$ be the generators of the cocompact lattice $D$ and denote $\pi(\Lambda_\Nc)$ by $\Lambda_\Vc$. Then, since $\Lambda_\Nc$ is cocompact in $\Nc$  we get that $\Lambda_\Vc  \otimes \R = \Vc$, hence we only have to check that  $\Lambda_\Vc$ is discrete in $\Vc$. Before continuing, we define the bilinear map $[\cdot,\cdot]_\Vc:\Vc \times \Vc \to \Zc$ as $[\lambda, \lambda']_\Vc = [\lambda_\Nc, \lambda'_\Nc]$, where $\lambda_\Nc, \lambda'_\Nc$ are elements in $\Nc$ such that $\pi(\lambda_\Nc)=\lambda$ and $\pi(\lambda'_\Nc)=\lambda'$. This map is indeed well defined, as the group commutator on $\Nc$ depends only on the images through $\pi$. Moreover, it is easy to see that for every nonzero $\lambda \in \Vc$ there exists a $\lambda'$ such that $[\lambda,\lambda']_\Vc \neq 0$. Now, if $\Lambda_\Vc$ is not discrete in $\Vc$, then $[\Lambda_\Vc, \Lambda_\Vc]_\Vc$ is not discrete in $\Zc$ and, as remarked before, this is a contradiction, therefore $\Lambda_\Vc$ is a lattice. Choose $g_1 \dots g_{2d} \in \Vc$ such that their images in $\Vc$ generate the lattice $\Lambda_\Vc$. Thus, we have shown that $\Lambda_\Vc$ must be of the form $\langle g_1 \dots g_{2d}, h_1 \dots h_{d^2} \rangle$, where
$$
 g_k=
\begin{pmatrix}
I_d & \rvline & (b_j^k) & \rvline & (c_{ij}^k)\\
\hline
0 & \rvline & 1 & \rvline & (a_i^k) \\
\hline
0 & \rvline & 0 & \rvline& I_d
\end{pmatrix}\text{, }k\in \{1 \dots 2d\}\ \ \ \ h_k=
\begin{pmatrix}
I_d & \rvline & 0 & \rvline & (d_{ij}^k)\\
\hline
0 & \rvline & 1 & \rvline & 0 \\
\hline
0 & \rvline & 0 & \rvline& I_d
\end{pmatrix}\text{, }k\in \{1 \dots d^2\}
$$

Eventually, recall that the generator $g_0$ was chosen to be  of the form

\begin{equation}\label{g0}
g_0=
\begin{pmatrix}
I_d & \rvline & 0 & \rvline & 0\\
\hline
0 & \rvline & \alpha & \rvline & 0 \\
\hline
0 & \rvline & 0 & \rvline& I_d
\end{pmatrix}
\end{equation}
Imposing that $g_0g_kg_0^{-1} \in \Lambda_\Nc$ for all $k$, we get that there exist integers $\{n_{i,j} \vert 1\leq i,j \leq 2d\}$ and $\{p_{i,j}\vert 1\leq i\leq 2d; 1\leq j \leq d^2\}$ such that 
\begin{equation}\label{conj}
g_0g_kg_0^{-1}=g_1^{n_{k,1}} \dots g_{2d}^{n_{k,2d}}h_1^{p_{k,1}} \dots h_{d^2}^{p_{k,d^2}}
\end{equation}
as all elements of $\Lambda_\Nc$ are of this form. This implies that 
$$\alpha a_i^k = \sum_{j=1}^{2d}n_{kj}a_i^j\text{ and }\frac{1}{\alpha}b_i^k = \sum_{j=1}^{2d}n_{kj}b_i^j$$
Thus,  $\langle (a_i^1 \dots a_i^{2d})\vert \ i\in\{1 \dots d\}\rangle$ and $\langle (b_i^1 \dots b_i^{2d})\vert \ i\in\{1 \dots d\}\rangle$ are the eigenspaces of the matrix $N\coloneqq (n_{i,j})$ associated to the eigenvalues $\alpha$ and $\frac{1}{\alpha}$ respectively. Moreover, the matrix $N$ is from $SL_{2d}(\Z)$ and has eigenvalues $\alpha$ and $\frac{1}{\alpha}$, each with multiplicity $d$. 

\begin{remark}
To ensure the existence of the matrix $N$ as described above, we need to choose the value of $\alpha \in \R_>$. The minimal polynomial of $N$ is $P(X)=(X-\alpha)(X-\frac{1}{\alpha})$, so $\alpha$ is a real quadratic integer. Thus, $\alpha$ should be of the form $\frac{\beta\pm\sqrt{\beta^2-4}}{2}$, for some integer $\beta \geq 3$. For any such choice of $\beta$, there are indeed integer matrices $N$ with the required eigenvalues, take for example the block-diagonal matrix formed by repeating the $2\times 2$-block $\begin{pmatrix}
1 & 1 \\
\beta-1 & \beta\\
\end{pmatrix}$. We will assume $\alpha > 1$.

\end{remark}

\begin{lemma}\label{dis}
For any choice of the matrices
$$
 g_k=
\begin{pmatrix}
I_d & \rvline & (b_j^k) & \rvline & (c_{ij}^k)\\
\hline
0 & \rvline & 1 & \rvline & (a_i^k) \\
\hline
0 & \rvline & 0 & \rvline& I_d
\end{pmatrix}\text{, }k\in \{1 \dots 2d\}\
$$
satisfying the relations \ref{conj}, the subgroup of $\Zc$ generated by  all commmutators
$g_ig_jg_i^{-1}g_j^{-1}$ is discrete.
\end{lemma}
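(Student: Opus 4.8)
The plan is to bundle all the commutators $g_ig_jg_i^{-1}g_j^{-1}$ into a single linear map and to show that its kernel is a rational subspace. Identify $\Zc$ additively with $\mathrm{Mat}_d(\R)\cong\R^{d^2}$ via the $(c_{ij})$-block, and write $b^i\in\R^d$ (a column) and $a^j\in\R^d$ (a row) for the $b$- and $a$-blocks of $g_i$ and $g_j$. A one-line multiplication in $\Nc$ shows that $g_ig_jg_i^{-1}g_j^{-1}$ corresponds under this identification to $M_{ij}:=b^ia^j-b^ja^i$, an outer product of $d$-vectors; since $M_{ij}$ is antisymmetric in $(i,j)$ and $M_{ii}=0$, there is a well-defined linear map
\[
q\colon \textstyle\bigwedge^2\R^{2d}\longrightarrow \mathrm{Mat}_d(\R),\qquad e_i\wedge e_j\longmapsto M_{ij},
\]
and the subgroup of $\Zc$ we must control is exactly $\mathcal C:=q\big(\bigwedge^2\Z^{2d}\big)$. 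As $\bigwedge^2\Z^{2d}$ is a full lattice in $\bigwedge^2\R^{2d}$, we have $\mathcal C\cong\bigwedge^2\Z^{2d}\big/\big(\bigwedge^2\Z^{2d}\cap\ker q\big)$, so by the standard dichotomy — a finitely generated subgroup of a real vector space is discrete iff its rank equals the dimension of its $\R$-span — the lemma reduces to the claim that $\ker q$ is a \emph{rational} subspace, i.e.\ one with a basis in $\bigwedge^2\Q^{2d}$.

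To pin down $\ker q$ I would set $\vec a_p=(a^1_p,\dots,a^{2d}_p)$, $\vec b_p=(b^1_p,\dots,b^{2d}_p)\in\R^{2d}$ for $p=1,\dots,d$, and use the standard inner product $\langle\,\cdot\,,\,\cdot\,\rangle$ on $\bigwedge^2\R^{2d}$ (for which the $e_i\wedge e_j$, $i<j$, are orthonormal). Expanding on $\omega=\sum_{i<j}\omega_{ij}e_i\wedge e_j$ (with $(\omega_{ij})$ extended antisymmetrically), the $(p,q)$-entry of $q(\omega)$ is $\sum_{i,j}\omega_{ij}(\vec b_p)_i(\vec a_q)_j=\langle\omega,\vec b_p\wedge\vec a_q\rangle$, so $q(\omega)=0$ iff $\omega\perp\vec b_p\wedge\vec a_q$ for all $p,q$. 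By the eigenvalue relations derived from \ref{conj} (and the Remark), the $\vec a_p$ lie in the $\alpha$-eigenspace $E^+$ of $N$ and the $\vec b_p$ in the $\alpha^{-1}$-eigenspace $E^-$; and since the images of the $g_k$ generate the full-rank lattice $\Lambda_\Vc$, the $\vec a_p$ in fact span $E^+$ and the $\vec b_p$ span $E^-$ (both $d$-dimensional, as the minimal polynomial of $N$ is $(X-\alpha)(X-\alpha^{-1})$). Hence
\[
\ker q=\big(E^+\!\wedge E^-\big)^{\perp},\qquad E^+\!\wedge E^-:=\operatorname{span}\{\,u\wedge v\ :\ u\in E^+,\ v\in E^-\,\}\subset\textstyle\bigwedge^2\R^{2d}.
\]

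It then suffices to see that $E^+\!\wedge E^-$ is rational, since its orthogonal complement will be too (the inner product being defined over $\Q$). As $N$ is semisimple, $\bigwedge^2\R^{2d}=\bigwedge^2 E^+\oplus(E^+\!\wedge E^-)\oplus\bigwedge^2 E^-$ is $\bigwedge^2 N$-invariant, with $\bigwedge^2 N$ acting on the three summands by $\alpha^2$, $1$, $\alpha^{-2}$; since $\alpha^{\pm2}\neq1$, this identifies $E^+\!\wedge E^-$ with $\ker\big(\bigwedge^2 N-\mathrm{Id}\big)$. But $N\in SL_{2d}(\Z)$, so $\bigwedge^2 N$ has an integer matrix in the basis $\{e_i\wedge e_j\}$ of $\bigwedge^2\Z^{2d}$ and its $1$-eigenspace is rational; hence $\ker q$ is rational and $\mathcal C$ is discrete. (This rationality is, in essence, the fact that $\alpha^2$ and $\alpha^{-2}$ are Galois conjugate.)

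The one genuinely delicate point is the second paragraph. The inner-product identification of bivectors with antisymmetric forms does \emph{not} respect the splitting $\R^{2d}=E^+\oplus E^-$, so it would be wrong to guess $\ker q=\bigwedge^2 E^+\oplus\bigwedge^2 E^-$; the correct kernel is $(E^+\!\wedge E^-)^{\perp}=\bigwedge^2\!\big((E^-)^{\perp}\big)\oplus\bigwedge^2\!\big((E^+)^{\perp}\big)$, i.e.\ $\bigwedge^2 F^+\oplus\bigwedge^2 F^-$ for the eigenspaces $F^{\pm}$ of $N^{\top}$ — keeping these transposes straight is where care is needed. As a cross-check one can avoid the dimension count: from $g_ig_jg_i^{-1}g_j^{-1}=g_0(g_ig_jg_i^{-1}g_j^{-1})g_0^{-1}=[g_0g_ig_0^{-1},\,g_0g_jg_0^{-1}]$ (using $\Zc=Z(G)$), together with \ref{conj} and the bi-additivity of the commutator on the $2$-step nilpotent group $\Nc$, one gets $M_{ij}=\sum_{k,l}n_{ik}n_{jl}M_{kl}$, i.e.\ $q=q\circ\bigwedge^2(N^{\top})$, whence $\ker q\supseteq\Ima\big(\bigwedge^2(N^{\top})-\mathrm{Id}\big)$ — a manifestly rational subspace of dimension $\binom{2d}{2}-d^2=d(d-1)=\dim\ker q$, so equality holds.
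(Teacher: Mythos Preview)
Your argument is correct and takes a genuinely different route from the paper. The paper first treats a \emph{special} choice of eigenvectors, namely $v_i\in\Q(\alpha)^{2d}$ with $w_i=\varphi(v_i)$ for the nontrivial Galois automorphism $\varphi$ of $\Q(\alpha)$; it then computes $\varphi(e_{ij}^{s,t})=-e_{ji}^{s,t}$, so the commutator matrices lie in the rational span of $T_{ij}-T_{ji}$ and $\sqrt{\delta}(T_{ij}+T_{ji})$, hence generate a lattice. The general case is reduced to the special one by showing that a change of eigenbases by matrices $K,L$ transforms the commutator matrices via $(E_{ij}^{s,t})=K(e_{ij}^{s,t})L^{\top}$, a linear isomorphism of $\Zc$. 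Your approach bypasses this two-step reduction entirely: you package the commutators into a single linear map $q\colon\bigwedge^2\R^{2d}\to\mathrm{Mat}_d(\R)$ and show directly that $\ker q=(E^+\wedge E^-)^{\perp}$ is rational because $E^+\wedge E^-=\ker(\bigwedge^2 N-\mathrm{Id})$ and $\bigwedge^2 N$ is an integer matrix. This is more intrinsic and basis-independent; what the paper's argument buys in exchange is an explicit description of the lattice in the Galois-symmetric case (the span of $T_{ij}-T_{ji}$ and $\sqrt{\delta}(T_{ij}+T_{ji})$), which is later used in Section~4 to build examples. Your final parenthetical remark that ``$\alpha^2$ and $\alpha^{-2}$ are Galois conjugate'' is exactly the bridge between the two viewpoints.
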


\begin{proof} Let $\varphi$ be the nontrivial automorphism of the field $\Q(\alpha)$, that is the automorphism sending $\sqrt{\delta}$ to $-\sqrt{\delta}$, where $\delta$ is the discriminant of $\Q(\alpha)$. We will first prove the lemma for a special choice of the matrices $g_i$: assume the eigenvectors $v_i \coloneqq (a_i^1 \dots a_i^{2d})$ are in $\Q(\alpha)^{2d}$ and that $w_i \coloneqq (b_i^1 \dots b_i^{2d}) = \varphi(a_i^1 \dots a_i^{2d})$ for all $i$. If we set $$g_sg_tg_s^{-1}g_t^{-1} =
\begin{pmatrix}
I_d & \rvline & 0 & \rvline & (e_{ij}^{s,t})\\
\hline
0 & \rvline & 1 & \rvline & 0 \\
\hline
0 & \rvline & 0 & \rvline& I_d
\end{pmatrix}
\text{,}$$
then 
\begin{equation}\label{com}
e_{ij}^{s,t}=b_i^sa_j^t-a_j^sb_i^t
\end{equation}
so $\varphi(e_{ij}^{s,t})=-e_{ji}^{s,t}$. Therefore, $(e_{ij}) \in \langle T_{ij}-T_{ji}, \sqrt{\delta}T_{ij}+\sqrt{\delta}T_{ji} \rangle_{\Q}$, where $T_{ij}$ is the matrix with $1$ in position $ij$ and $0$ everywhere else. Thus $\Zc$ is a finitely generated subgroup of  a $\Q$-vector space, and therefore a lattice. Now, we prove the lemma remains true for a general choice of the matrices $g_i$, more precisely, we show that making a different choice for the generators of the eigenspaces amounts to modifying the above discrete subgroup of $\Zc=\R^{d^2}$ by a linear transformation: Consider $V_i \coloneqq (A_i^1 \dots A_i^{2d})$ and $W_i \coloneqq (B_i^1 \dots B_i^{2d})$ be a different choice for the bases of the eigenspaces of $\alpha$ and $\frac{1}{\alpha}$ respectively. Then, if we set 
$$V_i=\sum_{j=1}^dl_{ij} v_j\text{, that is } A_i^s=\sum_{j=1}^dl_{ij}a_j^s$$
and
$$W_i=\sum_{j=1}^dk_{ij} w_j\text{, that is } B_i^s=\sum_{j=1}^dk_{ij}b_j^s$$
we may define 
$$E_{ij}^{s,t} = B_i^sA_j^t-A_j^sB_i^t$$
as before, then 
$$E_{ij}^{s,t} = \left(\sum_{p=1}^dk_{ip}b_p^s\right)\left(\sum_{r=1}^dl_{jr}a_r^t\right) - \left(\sum_{r=1}^dl_{jr}a_r^s\right)\left(\sum_{p=1}^dk_{ip}b_p^t\right)=$$
$$=\sum_{1\leq p , r \leq d}k_{ip}l_{jr}(b_p^sa_r^t-a_r^sb_p^t)=\sum_{1\leq p, r \leq d}k_{ip}l_{jr}e_{pr}^{s,t}$$
So the matrix $(E_{ij}^{s,t})$ can be expressed in terms of the matrix $(e_{ij}^{s,t})$ as
$$(E_{ij}^{s,t})=K(e_{ij}^{s,t})L^\top$$
where $K=(k_{ij})$ and $L=(l_{ij})$. 
\end{proof}

\begin{corollary}
The intersection of $\widetilde{\Lambda_\Nc} \coloneqq \langle g_1 \dots g_{2d} \rangle$ with $\Zc$ is a discrete cocompact subgroup of $\Zc$.
\end{corollary}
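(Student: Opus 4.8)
The plan is to pin down $\widetilde{\Lambda_\Nc}\cap\Zc$ as the group $\Gamma$ generated by the commutators $g_sg_tg_s^{-1}g_t^{-1}$, after which discreteness is exactly Lemma \ref{dis} and only cocompactness needs a separate argument.

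The first step is to use that $\Nc$ is $2$-step nilpotent with $[\Nc,\Nc]=\Zc=$ its centre, so that the group commutator on $\Nc$ is biadditive and alternating modulo $\Zc$. Because the commutators $g_sg_tg_s^{-1}g_t^{-1}$ are central, any product of the $g_s^{\pm 1}$ can be rewritten in the normal form $z\cdot g_1^{m_1}\cdots g_{2d}^{m_{2d}}$, where $z\in\Gamma$ and $m_s$ is the total exponent of $g_s$ in the word. Since $\pi(g_1),\dots,\pi(g_{2d})$ generate the rank-$2d$ lattice $\Lambda_\Vc$ and are therefore a $\Z$-basis of it, such a product lies in $\Zc=\ker\pi$ if and only if all $m_s=0$. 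Hence $\widetilde{\Lambda_\Nc}\cap\Zc=\Gamma$, and discreteness of $\Gamma$ is precisely the content of Lemma \ref{dis}.

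For cocompactness it is enough to show that the matrices $E^{s,t}:=(e^{s,t}_{ij})$ of \ref{com} span $\Zc\cong\R^{d\times d}$ over $\R$, since a discrete subgroup of $\R^{d^2}$ whose $\R$-span is everything is automatically a full lattice. From $e^{s,t}_{ij}=b^s_ia^t_j-b^t_ia^s_j$ one has $E^{s,t}=\mathbf b^s(\mathbf a^t)^{\top}-\mathbf b^t(\mathbf a^s)^{\top}$ with $\mathbf a^s=(a^s_1,\dots,a^s_d)^{\top}$ and $\mathbf b^s=(b^s_1,\dots,b^s_d)^{\top}$. The crucial observation is that the $2d$ vectors $(\mathbf a^s,\mathbf b^s)\in\R^{2d}$ form a basis of $\R^{2d}$: their entries are the columns of the $2d\times 2d$ matrix whose rows are $(a^1_i,\dots,a^{2d}_i)$ and $(b^1_i,\dots,b^{2d}_i)$ for $i=1,\dots,d$, and these two families of rows span the $\alpha$- and $\tfrac1\alpha$-eigenspaces of $N$, which are complementary because $\alpha\neq\tfrac1\alpha$. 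Consequently $U:=\{\mu\in\R^{2d}:\sum_s\mu_s\mathbf a^s=0\}$ and $W:=\{\lambda\in\R^{2d}:\sum_s\lambda_s\mathbf b^s=0\}$ are complementary $d$-dimensional subspaces, $\mu\mapsto\sum_s\mu_s\mathbf b^s$ restricts to an isomorphism $U\to\R^d$, and $\lambda\mapsto\sum_s\lambda_s\mathbf a^s$ restricts to an isomorphism $W\to\R^d$. Now $(\lambda,\mu)\mapsto(\sum_s\lambda_s\mathbf b^s)(\sum_t\mu_t\mathbf a^t)^{\top}-(\sum_s\mu_s\mathbf b^s)(\sum_t\lambda_t\mathbf a^t)^{\top}$ is bilinear and takes the value $E^{s,t}$ on standard basis pairs, so its image equals the span of the $E^{s,t}$; restricting to $\lambda\in W$ and $\mu\in U$ kills the first term and the second runs over all products $\mathbf x\mathbf y^{\top}$ with $\mathbf x,\mathbf y\in\R^d$, which span $\R^{d\times d}$. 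This would give cocompactness.

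I expect the only genuinely substantive point to be the cocompactness argument, and inside it the remark that the $\alpha$- and $\tfrac1\alpha$-eigenspaces being complementary forces $\{(\mathbf a^s,\mathbf b^s)\}$ to be a basis of $\R^{2d}$; everything after that is routine rank-one bookkeeping. One also has to be slightly careful in the first step that $\widetilde{\Lambda_\Nc}\cap\Zc$ is not strictly larger than $\Gamma$, which is exactly where the $2$-step nilpotency of $\Nc$ together with the fact that $\pi(g_1),\dots,\pi(g_{2d})$ is a $\Z$-basis of $\Lambda_\Vc$ is used; discreteness itself needs nothing beyond Lemma \ref{dis}.
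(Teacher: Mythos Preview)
Your argument is correct. The discreteness half is essentially the paper's proof in different words: the paper writes out the identity $[h,gg']=[h,g][h,g']$ explicitly to reduce $[\widetilde{\Lambda_\Nc},\widetilde{\Lambda_\Nc}]$ to products of $[g_i,g_j]$, while you invoke the normal form $z\,g_1^{m_1}\cdots g_{2d}^{m_{2d}}$ coming from $2$-step nilpotency; both amount to the same computation, and both hinge on $\pi(g_1),\dots,\pi(g_{2d})$ being a $\Z$-basis of $\Lambda_\Vc$.

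For cocompactness the paper takes a more conceptual route: it identifies $\Zc$ with the subspace $\langle e_i\wedge f_j\rangle\subset\bigwedge^2\Vc$ so that $[\widehat{g_s},\widehat{g_t}]_\Vc$ is the projection of $\widehat{g_s}\wedge\widehat{g_t}$, and then simply observes that since $\{\widehat{g_s}\}$ is a basis of $\Vc$, the wedges $\{\widehat{g_s}\wedge\widehat{g_t}\}$ form a basis of $\bigwedge^2\Vc$ and hence generate a cocompact lattice there, whose projection to $\Zc$ is still cocompact. Your argument unpacks exactly this in coordinates: your key fact that $\{(\mathbf a^s,\mathbf b^s)\}$ is a basis of $\R^{2d}$ is literally the statement that $\{\widehat{g_s}\}$ is a basis of $\Vc$, and your bilinear map is the composite $\Vc\times\Vc\to\bigwedge^2\Vc\to\Zc$. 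The exterior-algebra phrasing buys brevity; your explicit $U,W$ decomposition buys a self-contained argument that avoids setting up the identification of $\Zc$ inside $\bigwedge^2\Vc$. One small wording point: the image of a bilinear map is not itself a linear subspace, so ``its image equals the span of the $E^{s,t}$'' should read ``the span of its image equals the span of the $E^{s,t}$''; the logic is unaffected.
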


\begin{proof}
In order to prove discreteness, by the previous lemma, it is enough to show that every element in $\widetilde{\Lambda_\Nc} \cap \Zc$ is contained in the subgroup generated by the commutators $\{g_ig_jg_i^{-1}g_j^{-1} \vert i,j \in \{1 \dots 2d\}\}$. The group $\Lambda_\Vc = \langle \widehat{g_1} \dots \widehat{g_{2d}} \rangle$ (where $\;\widehat{}\;$ denotes the projection on $\Vc$) is the abelianization of $\widetilde{\Lambda_\Nc}$, thus $\widetilde{\Lambda_\Nc} \cap \Zc = [\widetilde{\Lambda_\Nc},\widetilde{\Lambda_\Nc}]$. We have to prove that any element in $[\widetilde{\Lambda_\Nc},\widetilde{\Lambda_\Nc}]$ can be written as a product of commutators $[g_i,g_j]$: Let $h,g,g' \in \widetilde{\Lambda_\Nc}$ , then $[h,gg']=h(gg')h^{-1}(gg')^{-1}=hgg'h^{-1}g'^{-1}g^{-1}=hg(h^{-1}h)g'h^{-1}g'^{-1}g^{-1}=hgh^{-1}(hg'h^{-1}g'^{-1})g^{-1}=hgh^{-1}[h,g']g^{-1}=hgh^{-1}g^{-1}[h,g']=[h,g][h,g']$. The second to last equality is due to the fact that all commutators are central. Using the above relation we may decompose any commutator as desired.\par

We may identify $\Vc$ with $\R^{2d}$ by choosing a basis $e_1 \dots e_d, f_1 \dots f_d$ with respect to which $\widehat{g_s} = (a_1^s \dots a_d^s, b_1^s \dots b_d^s)$, also $\Zc$ may be identified with the subspace $\langle e_i \wedge f_j \vert i,j \in\{1 \dots d\} \rangle$ of $\bigwedge^2 \Vc$. Then $[\widehat{g_s},\widehat{g_t}]_\Vc$ becomes the image of $\widehat{g_s}\wedge\widehat{g_t}$ under the projection on $\Zc$. Since $\{\widehat{g_1} \dots \widehat{g_{2d}}\}$ is a basis in of $\Vc$, then $\{\widehat{g_s} \wedge \widehat{g_t} \vert 1\leq s < t \leq 2d\}$ is a basis of $\bigwedge^2\Vc$ so it generates a cocompact lattice in $\bigwedge^2\Vc$. Therefore, $[\widetilde\Lambda_\Nc,\widetilde\Lambda_\Nc]=\widetilde\Lambda_\Nc \cap \Zc$ is a cocompact subgroup of $\Zc$.
\end{proof}

Finally, we choose the values for $(c_{ij}^k)$ in $g_k$ such that 
$$g_0g_kg_0^{-1}=g_1^{n_{k,1}} \dots g_{2d}^{n_{k,2d}}h_1^{p_{k,1}} \dots h_{d^2}^{p_{k,d^2}}$$
is satisfied, that is for fixed $i$ and $j$, $(c_{ij}^1 \dots c_{ij}^{2d})$ are the solutions to the following system of linear equations
\begin{equation}\label{sys}
\begin{pmatrix}
c_{ij}^1\\
 \vdots\\
 c_{ij}^{2d}
\end{pmatrix}=N
\begin{pmatrix}
c_{ij}^1\\
 \vdots\\
 c_{ij}^{2d}
\end{pmatrix}+
\begin{pmatrix}
f_{ij}^1\\
\vdots\\
f_{ij}^{2d}
\end{pmatrix}+P
\begin{pmatrix}
d_{ij}^1\\
 \vdots\\
 d_{ij}^{d^2}
\end{pmatrix}
\end{equation}
$P$ being the matrix $(p_{i,j})$ and 
$$f_{ij}^k=\sum_{s=1}^{2d}\frac{n_{k,s}(n_{k,s}-1)}{2}b_i^sa_j^s+\sum_{1\leq s<t\leq 2d}n_{k,s}n_{k,t}b_i^sa_j^t$$
Since $1$ is not an eigenvalue of $N$, this has a unique solution.

\hfill

To conclude, to construct a lattice $\Lambda$ of $G$, we proceed as follows:

\begin{itemize}
\item pick a matrix $N\in SL_{2d}(\Z)$ having eigenvalues $\alpha$ and $1/\alpha$ (with $\alpha$ a quadratic positive algebraic integer) both of multiplicities $d$;

\item   pick basis $\langle (a_i^1 \dots a_i^{2d})\vert \ i\in\{1 \dots d\}\rangle$ and  respectively $\langle (b_i^1 \dots b_i^{2d})\vert \ i\in\{1 \dots d\}\rangle$ for the the eigenspaces (over $\R$) associated to the eigenvalues $\alpha$ and $\frac{1}{\alpha}$ respectively.

\item Letting $e_{ij}^{s,t}$ be defined by (\ref{com}) and let
$$z^{s,t}:=
\begin{pmatrix}
I_d & \rvline & 0 & \rvline & (e_{ij}^{s,t})\\
\hline
0 & \rvline & 1 & \rvline & 0 \\
\hline
0 & \rvline & 0 & \rvline& I_d
\end{pmatrix}
.$$ 
We see that $\lambda_\Zc\coloneqq \langle\{z^{s,t}\}_{s, t=1, 2d}\rangle$ is discrete lattice in $\Zc$. Consider an arbitrary cocompact lattice $D\coloneqq \{h_k\}_{k=1 \dots d^2}$
$$h_k=
\begin{pmatrix}
I_d & \rvline & 0 & \rvline & (d_{ij}^k)\\
\hline
0 & \rvline & 1 & \rvline & 0 \\
\hline
0 & \rvline & 0 & \rvline& I_d
\end{pmatrix}\text{, }k\in \{1 \dots d^2\}
$$
such that $D\supset \{z^{s,t}\}_{s,t \in \{1\dots 2d\}}$.

\item consider an arbitrary matrix $P\in {\rm Mat}_{2d, d^2}(\Z)$ and let  $(c_{ij}^1 \dots c_{ij}^{2d})$  be the solutions of the system (\ref{sys}).
Define the matrices $$
 g_k=
\begin{pmatrix}
I_d & \rvline & (b_j^k) & \rvline & (c_{ij}^k)\\
\hline
0 & \rvline & 1 & \rvline & (a_i^k) \\
\hline
0 & \rvline & 0 & \rvline& I_d
\end{pmatrix}\text{, }k\in \{1 \dots 2d\}$$ and let $g_0$ be defined as in (\ref{g0}).
Then the subgroup  $$\Lambda\coloneqq \langle g_0, \{g_k\}_{k=1, \dots, 2d}, \{h_k\}_{k=1,\dots d^2}\rangle$$ is a cocompact lattice in $G$.

\end{itemize}

\begin{definition}
Let $d$ be an odd integer, set $n=\frac{(d+1)^2}{2}$ and let $\Lambda$ be a discrete cocompact subgroup of $G$ as above. We will denote by $X_{d,\Lambda}$ the compact complex manifold obtained as the quotient of $G=\HH \times \C^{n-1}$ by the left action of $\Lambda$.  
\end{definition}

\begin{remark}
When $d=1$, the manifold $X_{d,\Lambda}$ is an Inoue surface of type $S^{(+)}_{N,p,q,r;0}$. In this case, the cocompact lattice $D$ containing $\lambda_\Zc$ must be of the form $D=\frac{1}{r}\lambda_\Zc.$ The matrix $P$ is just 
$P=\left(\begin{array}{ccc}
p\\
q
\end{array}
\right)$;
see Inoue's original paper \cite{Ino}.
\end{remark}

\section{Betti numbers}
The manifold $X_{d,\Lambda}$ is diffeomorphic to a fiber bundle over a circle ($X_{d,\Lambda} \to \R_{>}/\langle \alpha \rangle$) and the fiber is $\Mc\coloneqq \Nc/\Lambda_\Nc$. In turn, $\Mc$ is a fiber bundle over a $2d$-torus 
$$\R^{2d}/\langle (a_1^k \dots a_d^k, b_1^k \dots b_d^k) \vert k\in\{1 \dots 2d\}\rangle$$ with fiber a $d^2$-torus ($\Zc/D$). We will compute the first two Betti numbers of $X_{d,\Lambda}$ using group cohomology and the spectral sequences associated to these two fiber bundles. The key tool is the following classical theorem (see \cite{Mum}):

\begin{theorem}\label{mumt}
\label{Mum}
Let $X$ be a topological space and $G$ a discrete group, acting freely and discontinously on $X$. Let $Y=X/G$ and $\pi:X \to Y$ be the projection, then for any sheaf $\F$ on $Y$, there is a natural homomorphism
$$\phi:H^p(G,\Gamma(X,\pi^*\F)) \to H^p(Y,\F)$$
If $H^i(X, \pi^*\F)=0$, $i \geq 1$, then the map $\phi$ is an isomorphism. Furthermore, the maps $\phi$ are compatible with cup products.
\end{theorem}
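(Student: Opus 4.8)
The plan is to recognize this as the Cartan--Leray spectral sequence for the Galois covering $\pi\colon X\to Y$ with deck group $G$, together with its multiplicative refinement, and to prove it by a double-complex argument.

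First I would record that, since $\pi\circ\gamma=\pi$ for every $\gamma\in G$, the sheaf $\pi^*\F$ carries a canonical $G$-equivariant structure; hence $G$ acts on $\Gamma(X,\pi^*\F)$, so that $H^p(G,\Gamma(X,\pi^*\F))$ is meaningful, and it acts on the terms of the functorial (Godement) flasque resolution $\mathcal C^\bullet$ of $\pi^*\F$ on $X$. Then I would form the first-quadrant double complex $K^{p,q}=C^p\!\bigl(G;\Gamma(X,\mathcal C^q)\bigr)$, where $C^\bullet(G;M)$ is the inhomogeneous bar complex computing $H^\bullet(G;M)$, with the group-cochain differential in the $p$-direction and the resolution differential in the $q$-direction, and analyze its two spectral sequences.

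Taking cohomology first in $q$: since $M\mapsto C^p(G;M)=\mathrm{Map}(G^p,M)$ is exact in $M$ (a product) and the $\mathcal C^q$ are $\Gamma(X,-)$-acyclic, the resulting spectral sequence has $E_2^{p,q}=H^p\!\bigl(G;H^q(X,\pi^*\F)\bigr)\Rightarrow H^{p+q}(\mathrm{Tot}\,K)$; the edge morphism $H^p(G,\Gamma(X,\pi^*\F))=E_2^{p,0}\to H^p(\mathrm{Tot}\,K)$ is (one half of) $\phi$, and it is an isomorphism exactly when $E_2^{p,q}=0$ for $q\ge1$, i.e. when $H^{\ge1}(X,\pi^*\F)=0$. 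Taking cohomology first in $p$: for fixed $q$ one gets $H^p\!\bigl(G;\Gamma(X,\mathcal C^q)\bigr)$, and this is where the hypotheses enter. Because $G$ acts freely and properly discontinuously, $\pi$ is a covering map, $\pi_*$ is exact, and $\Gamma(X,\mathcal C^q)\cong\prod_{x\in X}(\mathcal C^q)_x$ is, as a $G$-module, a product of modules coinduced from the trivial subgroup (the $G$-action permutes the points of $X$ freely and identifies stalks along orbits via the equivariant structure); so by Shapiro's lemma and exactness of products $H^{\ge1}(G;\Gamma(X,\mathcal C^q))=0$, while $H^0=\Gamma(X,\mathcal C^q)^G=\Gamma\!\bigl(Y,(\pi_*\mathcal C^q)^G\bigr)$. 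Hence this second spectral sequence degenerates to $H^q\!\bigl(\Gamma(Y,(\pi_*\mathcal C^\bullet)^G)\bigr)$; and since $(\pi_*\mathcal C^\bullet)^G$ is a resolution of $(\pi_*\pi^*\F)^G=\F$ by $\Gamma(Y,-)$-acyclic sheaves --- acyclicity following from the $G$-acyclicity just noted --- this abutment is $H^q(Y,\F)$. Comparing the two computations gives the isomorphism $\phi\colon H^p(G,\Gamma(X,\pi^*\F))\xrightarrow{\ \sim\ }H^p(Y,\F)$ under the stated vanishing, and in general defines $\phi$ as the first edge map followed by the (always invertible) second one; naturality in $\F$ is inherited from functoriality of $\pi^*$, of the Godement construction, and of the edge morphisms.

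For the cup-product compatibility I would, given a pairing $\F\otimes\mathcal G\to\mathcal H$ on $Y$, pull it back and combine the standard associative product $\mathcal C^\bullet(-)\otimes\mathcal C^\bullet(-)\to\mathcal C^\bullet(-\otimes-)$ on Godement resolutions with the cup product of group cochains; these are compatible with both differentials of the double complex, hence induce a pairing of total complexes compatible with both edge maps, so $\phi$ intertwines the two cup products. I expect the main obstacle to be the second spectral sequence: establishing the $G$-acyclicity of $\Gamma(X,\mathcal C^q)$ and the $\Gamma(Y,-)$-acyclicity of $(\pi_*\mathcal C^q)^G$, which is exactly the point where freeness and proper discontinuity of the action are needed (so that $\pi$ is a covering, $\pi_*$ is exact, and evenly-covered neighbourhoods exhibit the relevant modules as coinduced); the remaining steps are formal manipulations of the double complex.
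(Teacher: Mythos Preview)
The paper does not prove this theorem: it is stated as a classical result, with a citation to Mumford's \emph{Abelian Varieties}, and then used as a black box. So there is no ``paper's own proof'' to compare against.

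Your outline is the standard Cartan--Leray/Grothendieck double-complex argument and is essentially correct. One imprecision worth fixing: it is not true that $\Gamma(X,\mathcal C^q)\cong\prod_{x\in X}(\mathcal C^q)_x$ for $q\ge 0$ (already for $q=0$ the stalk $(\mathcal C^0)_x$ is a complicated colimit, not $\F_x$). What is true, and what you actually need, is that each Godement term is of the form $\mathcal C^0(\text{some sheaf})$, so its global sections are a product $\prod_{x\in X} N^q_x$ indexed by points of $X$, with $G$ permuting the index set freely; that is exactly the coinduced-module structure required for Shapiro's lemma. The same observation, applied over $G$-stable opens $\pi^{-1}U$, also gives the flasqueness of $(\pi_*\mathcal C^q)^G$ on $Y$ that you invoke at the end. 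With this correction the argument goes through as written.
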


We start by studying the cohomology of the fiber $\Mc\simeq (\R^{2d}\times T^{d^2})/F$, where $F(\simeq \Z^{2d})\subset \Vc$ is the group generated by $\{\widehat{g_1}\dots \widehat{g_{2d}}\}$. Consider the spectral sequence starting from the page zero given by $E_0^{p,q}\simeq C^p(F, \Gamma(\mathcal{A}^q(\R^{2d}\times T^{d^2})))$ (where ${\mathcal A}^q$ is the sheaf of differential $q-$forms), $d$ denoting the vertical differential and $\delta$ the horizontal one. Here, $E_0^{p,q}$ is the set of all functions $F^{p} \to \Gamma(\mathcal{A}^q(\R^{2d}\times T^{d^2}))$, $\delta$ is the differential of the bar resolution and $d$ is the usual exterior differential of forms. By the previous theorem, taking cohomology along the lines of the bicomplex will evaluate the cohomology of the sheaves $\mathcal{A}^q$ on $\Mc$. Since these sheaves are fine, we know that the spectral sequence will converge to the cohomology of $\Mc$. The second page of the spectral sequence is $E_2^{p,q}\simeq H^p(F,H^q(T^{d^2},\C))$ and $F$ acts trivially on the cohomology of $T^{d^2}$, therefore $E_2^{p,q}\simeq H^p(F,\C)\otimes H^q(T^{d^2},\C)$.

\begin{remark}
If a cohomological spectral sequence with multiplicative structure has the second page multiplicatively generated by elements of total degree one, then $E_3^{p,q}\simeq E_{\infty}^{p,q}$. Moreover, in order to compute page $3$, one only needs to know the map $d_2:E_2^{0,1} \to E_2^{2,0}$. This applies to our spectral sequence and it is true in general for spectral sequences coming from principal fiber bundles with a torus fiber over a torus base. 
\end{remark}

The generators of $H^0(F,H^1(T^{d^2},\C))$ are represented by $\{\phi_{ij}:id_{F} \to dc_{ij}\}$. Let $(k_1\dots k_{2d})\in \Z^{2d} \simeq F$, then:
$$\delta(\phi_{ij})(k_1\dots k_{2d})=(k_1\dots k_{2d})\cdot \phi_{ij}-\phi_{ij}=((g_1^{k_1}\dots g_{2d}^{k_{2d}})^{-1})^{*}dc_{ij}-dc_{ij}=\left(-\sum_{s=1}^{2d}k_sb_j^s\right)da_i$$
Let $\psi_{ij}$ be such that $d(\psi_{ij})=\delta(\phi_{ij})$, for example:
$$\psi_{ij}(k_1\dots k_{2d})=\left(-\sum_{s=1}^{2d}k_sb_j^s\right)a_i$$
Let $(l_1 \dots l_{2d})\in \Z^{2d} \simeq F$, we compute $\delta(\psi_{ij})$:
$$\delta(\psi_{ij})((k_1\dots k_{2d}),(l_1 \dots l_{2d}))=$$ $$=((g_1^{k_1}\dots g_{2d}^{k_{2d}})^{-1})^{*}\psi_{ij}(l_1\dots l_{2d})-\psi_{ij}(k_1+l_1 \dots k_{2d}+l_{2d})+\psi_{ij}(k_1\dots k_{2d})=$$
$$=\left(-\sum_{s=1}^{2d}l_sb_j^s\right)\left(a_i-\sum_{t=1}^{2d}k_ta_i^t\right)-\left(-\sum_{s=1}^{2d}(k_s+l_s)b_j^s\right)a_i+\left(-\sum_{s=1}^{2d}k_sb_j^s\right)a_i=$$
$$=\left(-\sum_{s=1}^{2d}l_sb_j^s\right)\left(-\sum_{t=1}^{2d}k_ta_i^t\right)$$

For $\delta(\psi_{ij})$ determined above, we have $d_2([\phi_{ij}]_2)=[\delta \psi_{ij}]_2$. By identifying $E_2^{2,0}$ with $H^2_{DR}(T^{2d},\C)$, $d_2([\phi_{ij}]_2)$ becomes 
$$\sum_{1\leq s < t \leq 2d}(\delta(\psi_{ij})(\hat{g_s},\hat{g_t})-\delta(\psi_{ij})(\hat{g_t},\hat{g_s}))du_s\wedge du_t=\sum_{1\leq s < t \leq 2d}(b_j^sa_i^t-b_j^ta_i^s)du_s\wedge du_t$$
where $u_i$ are the coordinate functions on $\R^{2d}$ with respect to the basis $\{\widehat{g_i}\}$. Rewriting this in terms of $a_i$ and $b_i$, we get 
$$\sum_{1\leq s < t \leq 2d}(b_j^sa_i^t-b_j^ta_i^s)du_s\wedge du_t = db_j \wedge da_i$$
This proves that $E_3^{0,1}=0$ and $E_3^{2,0}=(\bigwedge^2\langle da_1\dots da_d \rangle)\oplus (\bigwedge^2\langle db_1 \dots db_d\rangle )$, so $H^1(\Mc,\C)=\langle da_i, db_j \rangle$ and $H^2(\Mc,\C)=\bigwedge^2\langle da_i\rangle \oplus \bigwedge^2\langle db_j \rangle \oplus \langle da_i \otimes dc_{ij}\rangle \oplus \langle db_j \otimes dc_{ij} \rangle$.\par
We may now compute the first two Betti numbers of $X_{d,\Lambda}$. Consider the following change in coordinates on $G$: $b_i^{'}=\frac{b_i}{\alpha}$, for $i\in\{1\dots d\}$ and all the other coordinates remain unchanged. This presents  the manifold $G/\Lambda_\Nc$ as the product $\R_>\times \Mc$, where $\Mc$ is as above. This time, we are interested in the spectral sequence whose second page is $E_2^{p,q}=H^p(\Z,H^q(\Mc,\C))$, where the generator of $\Z$ acts as $g_0$. Since $H^1(\Mc,\C)$ is finite-dimensional, the dimensions of $H^0(\Z,H^1(\Mc,\C))$ and $H^1(\Z,H^1(\Mc,\C))$ are equal - since they are the kernel and cokernel of $(g_0^{-1})^*- id$ as an endomorphism of $H^1(\Mc,\C)$. No elements of $H^1(\Mc,\C)$ are invariant under the action of $g_0$ (since $(g_0^{-1})^*(da_i)=\frac{1}{\alpha} da_i$ and $(g_0^{-1})^*(db_i^{'})=\alpha db_i^{'}$), thus $H^0(\Z,H^1(\Mc,\C))=H^1(\Z,H^1(\Mc,\C))=0$. Similarly, $H^0(\Z,H^2(\Mc,\C))=0$ and $H^1(\Z,H^0(\Mc,\C))=H^1(\Z,\C)=\C$. We have proved the following:

\begin{proposition}
The first two Betti numbers of $X_{d,\Lambda}$ are $b_1=1$ and $b_2=0$.
\end{proposition}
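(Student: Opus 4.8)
The plan is to carry out the two Leray–Serre-type spectral sequence computations that the preceding paragraphs have already set up, and then read off $b_1$ and $b_2$ from the edge terms. Both spectral sequences arise via Theorem~\ref{mumt}: the first computes $H^*(\Mc,\C)$ from the principal $T^{d^2}$-bundle $\Mc \to T^{2d}$, the second computes $H^*(X_{d,\Lambda},\C) = H^*(G/\Lambda,\C)$ from the mapping-torus structure $G/\Lambda_\Nc \to \R_>/\langle \alpha\rangle$ with fiber $\Mc$. The key structural input, recorded in the Remark, is that a multiplicative cohomological spectral sequence whose $E_2$-page is generated by elements of total degree $1$ degenerates at $E_3$, so it suffices to know the single transgression $d_2 \colon E_2^{0,1} \to E_2^{2,0}$ in each case.

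First I would compute $H^*(\Mc,\C)$. Here $E_2^{p,q} \simeq H^p(F,\C)\otimes H^q(T^{d^2},\C)$ with $F \simeq \Z^{2d}$ acting trivially on $H^*(T^{d^2},\C)$. The degree-one generators are the classes $da_i, db_j$ pulled back from the base torus $T^{2d}$ (which survive, being in $E_2^{\bullet,0}$) and the fiber classes $dc_{ij} \in H^0(F,H^1(T^{d^2},\C))$. The transgression of $dc_{ij}$ is computed via the bar resolution: one writes $\delta(\phi_{ij})$, finds a primitive $\psi_{ij}$ with $d\psi_{ij}=\delta(\phi_{ij})$, computes $\delta(\psi_{ij})$, and identifies the resulting class in $E_2^{2,0} \simeq H^2(T^{2d},\C)$. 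The calculation—already displayed in the excerpt—yields $d_2(dc_{ij}) = db_j \wedge da_i$. Since the $2d$ one-forms $da_1,\dots,da_d,db_1,\dots,db_d$ are linearly independent (they correspond to the two $d$-dimensional eigenspaces of $N$, which intersect trivially as $\alpha \neq 1/\alpha$), the wedges $db_j\wedge da_i$ are linearly independent in $\bigwedge^2\langle da_i,db_j\rangle$. Hence $d_2$ is injective on $E_2^{0,1}$, giving $E_3^{0,1}=E_\infty^{0,1}=0$ and $E_3^{2,0}=E_\infty^{2,0}=\bigl(\bigwedge^2\langle da_i\rangle\bigr)\oplus\bigl(\bigwedge^2\langle db_j\rangle\bigr)$, and reading off the $E_\infty$ antidiagonals gives $H^1(\Mc,\C)=\langle da_i,db_j\rangle$ (dimension $2d$) and the stated $H^2(\Mc,\C)$.

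Next I would run the second spectral sequence, $E_2^{p,q}=H^p(\Z,H^q(\Mc,\C))$, where the generator acts by $(g_0^{-1})^*$. After the coordinate change $b_i' = b_i/\alpha$, which trivializes $G/\Lambda_\Nc$ as $\R_> \times \Mc$, the action of $g_0$ on one-forms is diagonal: $(g_0^{-1})^*da_i = \tfrac1\alpha da_i$ and $(g_0^{-1})^*db_i' = \alpha\, db_i'$. Since $\alpha>1$, the eigenvalues $\tfrac1\alpha$ and $\alpha$ are both $\neq 1$, so $(g_0^{-1})^* - \mathrm{id}$ is invertible on $H^1(\Mc,\C)$; as $H^1(\Mc,\C)$ is finite-dimensional its kernel and cokernel both vanish, so $H^0(\Z,H^1(\Mc))=H^1(\Z,H^1(\Mc))=0$. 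The same eigenvalue argument on the generators of $H^2(\Mc,\C)$ (each a product of the one-forms above, hence with eigenvalue a product of $\alpha^{\pm1}$'s; one checks none of these products equals $1$) gives $H^0(\Z,H^2(\Mc))=0$. Finally $H^0(\Z,H^0(\Mc))=\C$ and $H^1(\Z,H^0(\Mc))=H^1(S^1,\C)=\C$. Collecting the $E_2$ (hence $E_\infty$, since the base $S^1$ forces $d_r=0$ for $r\geq 2$) terms along total degree $1$ gives $b_1 = \dim H^1(\Z,H^0) + \dim H^0(\Z,H^1) = 1+0 = 1$, and along total degree $2$ gives $b_2 = \dim H^0(\Z,H^2) + \dim H^1(\Z,H^1) = 0+0 = 0$.

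The only genuinely delicate point is the transgression computation $d_2(dc_{ij}) = db_j\wedge da_i$ and the verification that these target classes are linearly independent; everything after that is a bookkeeping exercise in eigenvalues of $g_0$. I would therefore present the first spectral sequence and its $d_2$ in full (as in the excerpt) and treat the second one briskly, noting only that finite-dimensionality plus $\alpha\notin\{1\}$ with all relevant eigenvalue-products $\neq 1$ kills every potentially-contributing term except $H^1(\Z,\C)$.
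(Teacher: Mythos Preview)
Your proposal is correct and follows the paper's argument essentially line for line: the same two spectral sequences, the same transgression $d_2(dc_{ij})=db_j\wedge da_i$, and the same eigenvalue bookkeeping for the $g_0$-action. The only small imprecision is your parenthetical that the $H^2(\Mc,\C)$ generators have eigenvalues that are ``products of $\alpha^{\pm1}$'s'' --- the classes $da_i\otimes dc_{ij}$ and $db_j'\otimes dc_{ij}$ involve $dc_{ij}$, on which $g_0$ acts trivially, but the resulting eigenvalues $\alpha^{\mp1}$ are still $\neq 1$, so the conclusion stands.
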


\begin{corollary}
The manifolds $X_{d,\Lambda}$ are non-K\"ahler.
\end{corollary}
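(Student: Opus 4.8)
The plan is to deduce this directly from the Proposition just proved, which asserts $b_1(X_{d,\Lambda}) = 1$. I would invoke the classical constraint on the Betti numbers of a compact Kähler manifold: Hodge theory gives the decomposition $H^k(X,\C) \simeq \bigoplus_{p+q=k} H^{p,q}(X)$ together with the symmetry $H^{p,q} \simeq \overline{H^{q,p}}$, so that every \emph{odd} Betti number of a compact Kähler manifold is even. Since $b_1(X_{d,\Lambda}) = 1$ is odd, $X_{d,\Lambda}$ cannot carry any Kähler metric. This is the entire argument; there is no real obstacle, as the work has all been done in computing $b_1$.

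Alternatively — and this is worth remarking on as a second route that uses the other half of the Proposition — one can argue from $b_2(X_{d,\Lambda}) = 0$. If $X_{d,\Lambda}$ admitted a Kähler form $\omega$, then since $\dim_\C X_{d,\Lambda} = n \geq 1$ the class $[\omega] \in H^2(X_{d,\Lambda},\R)$ would be nonzero (indeed $[\omega]^n \neq 0$ because $\int_{X_{d,\Lambda}} \omega^n > 0$), contradicting $H^2(X_{d,\Lambda},\C) = 0$. I would present the $b_1$ argument as the main one since it is the more standard phrasing, but either suffices and both are immediate given the Proposition.

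If desired, one can phrase the whole thing in a single sentence: a compact complex manifold with $b_1$ odd (equivalently, with $b_2 = 0$ in positive dimension) is never Kähler, and the $X_{d,\Lambda}$ were shown above to have $b_1 = 1$, $b_2 = 0$.
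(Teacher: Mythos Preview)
Your proposal is correct and matches the paper's intent: the corollary is stated immediately after the Proposition with no proof, so the paper treats it as an immediate consequence of $b_1=1$ (and/or $b_2=0$), exactly as you argue. Either of your two routes is the standard justification the reader is expected to supply.
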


\section{Analytic properties}
In this section we will study a certain class of manifolds of type $X_{d,\Lambda}$. Let $\Lambda$ be a discrete subgroup cocompact of $G$ as in the first section, consider the following properties of such a subgroup:

\begin{definition}
1) We say that $\Lambda$ is of toroidal type if the quotient of $\C^{n-m}$ by the subgroup $D$ is a toroidal group. Here, $\C^{n-m}$ is the subspace of $G$ given by the variables $z_{ij}$ with $i\neq 0$ and $m=\frac{d-1}{2}$, as before.\\
2) We say that $\Lambda$ is of algebraic type if all the entries in the elements of $\Lambda$ are algebraic numbers.
\end{definition}

\begin{remark}
For $d=1$, the subgroup $\Lambda$ cannot be of toroidal type.
\end{remark}

\textbf{Examples:}
\begin{itemize}
\item[i)] Let $d=3$, and let $\Lambda$ be chosen as in the special case of Lemma \ref{dis}. That is, if we represent the matrices in $D$ by their upper right $3\times3$ block, then $D$ is of finite rank in $q\langle T_{ij}-T_{ji}, \sqrt{\delta}T_{ij}+\sqrt{\delta}T_{ji}\vert 1\leq i,j \leq 3\rangle$, where $q$ is a rational, nonzero  number. To check if $\Lambda$ is of toroidal type, we only need to check if the first columns of these matrices generate a dense subgroup in $\R^3$ (see \cite{Abe}). The matrix formed with these columns is:
$$\begin{pmatrix}
1 & 0 & 0 & \sqrt{\delta} & 0 & 0 & 0 & 0 & 0\\
0 & 1 & 0 & 0 & \sqrt{\delta} & 0 & 0 & 0 & 0\\
0 & 0 & 0 & 0 & 0 & 0 &  \sqrt{\delta} & 0 & 0\\
\end{pmatrix}$$
They do not generate a dense subgroup, so, in this case $\Lambda$ is not of toroidal type.
\item[ii)] Let $d$ and $N\in SL_6(\Z)$ be the same as for the previous example, but this time change the bases of the eigenspaces by the matrices $L$ and $K$ (again, see \ref{dis}). This time, we shall look at the subgroup generated by the first columns of the matrices $K(T_{ij}-T_{ji})L^\top, K(\sqrt{\delta}T_{ij}+\sqrt{\delta}T_{ji})L^\top$. In fact, multiplying with $K$ from the right will not affect the property of this subgroup of being dense, since it changes $\R^3$ by a linear transformation, so we may consider $K=I_3$. As above, we now have:
$$\begin{pmatrix}
l_{11} & 0 & l_{12} & \sqrt{\delta}l_{11} &0 & \sqrt{\delta}l_{12} & 0 & 0 & \sqrt{\delta}l_{13}\\
0 & l_{11} & -l_{13} & 0& \sqrt{\delta}l_{11} & \sqrt{\delta}l_{13} & 0& \sqrt{\delta}l_{12}&0\\
-l_{13} & -l_{12} & 0 & \sqrt{\delta}l_{13}& \sqrt{\delta}l_{12} & 0&\sqrt{\delta}l_{11}&0&0\\
\end{pmatrix}$$ 
which is plainly dense for generic choices of $L$ and $K.$  Notice that if $L$ and $K$ have algebraic entries then $\Lambda$ is   of algebraic type.
\end{itemize}

If $\Gamma$ is a discrete subgroup of $\C^n$ such that $\C^n/\Gamma$ is toroidal and all the entries of the period matrix representing $\Gamma$ are algebraic numbers, then $\C^n/\Gamma$ satisfies Hodge decomposition. Moreover, this is also true for every domain $U$ of $\C^n/\Gamma$ such that its inverse image $\tilde{U}$ in $\C^n$ is convex (see \cite{OtTo}). This result will be useful in proving the following:

\begin{proposition} \label{iso}
Let $X$ be a manifold of the form $G/\Lambda$, such that $\Lambda$ is of toroidal and algebraic type, then the map $\zeta:H^1(X,\C) \to H^1(X,\OO_X)$, induced by the sheaf inclusion $\C \to \OO_X$, is an isomorphism.
\end{proposition}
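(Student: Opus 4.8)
The plan is to realize both cohomology groups via the same kind of fibration-plus-group-cohomology machinery that was used to compute the Betti numbers, and to match them degree by degree. Recall $X = G/\Lambda$ is a bundle over the circle $\R_>/\langle\alpha\rangle$ with fiber $\Mc = \Nc/\Lambda_\Nc$, and $\Mc$ is in turn a torus bundle over a $2d$-torus. First I would set up the Leray--Serre (equivalently, group-cohomology) spectral sequence for $1 \to \Lambda_\Nc \to \Lambda \to \Z \to 1$ acting on the sheaf $\OO_X$, exactly as was done for $\C$ in the Betti-number section, so that $E_2^{p,q} = H^p(\Z, H^q(\Mc, \OO_{\Mc}))$. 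The sheaf inclusion $\C \hookrightarrow \OO_X$ induces a morphism of short exact sequences and hence a morphism of the two spectral sequences, compatible with the edge maps, so it suffices to understand $\zeta$ on the fiber $\Mc$ and then take $\Z$-invariants/coinvariants.

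The key input is Proposition~\ref{iso}'s hypotheses combined with the Ötztürk--Toma type result quoted just before the statement: because $\Lambda$ is of toroidal and algebraic type, the relevant Dolbeault cohomology of $\Mc$ (and of the toroidal group $\C^{n-m}/D$ sitting inside it) can be computed, and $\C \to \OO$ induces an isomorphism onto the $(p,0)$-part of the Hodge decomposition there. Concretely I would argue that $H^1(\Mc, \OO_{\Mc}) = H^{0,1}_{\bar\partial}(\Mc)$ is spanned by the $\bar\partial$-harmonic $(0,1)$-forms dual to the generators $da_i, db_j$ found in the previous section, so that the real structure identifies $H^1(\Mc,\C) \otimes \C$ with $H^{1,0} \oplus H^{0,1}$ and $\zeta$ on $\Mc$ is injective with the expected image. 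One then checks that the action of $g_0$ on $H^1(\Mc,\OO_{\Mc})$ is, via this identification, the same matrix (scaling the $a_i$-directions by $1/\alpha$ and the $b_j$-directions by $\alpha$) that appeared in the Betti computation, so that $H^0(\Z, H^1(\Mc,\OO_{\Mc})) = H^1(\Z, H^1(\Mc,\OO_{\Mc})) = 0$, while $H^1(\Z, H^0(\Mc,\OO_{\Mc})) = H^1(\Z,\C) = \C$. Hence both $H^1(X,\C)$ and $H^1(X,\OO_X)$ are $1$-dimensional and come entirely from the $E_2^{1,0}$ slot $H^1(\Z, H^0(-))$, where the comparison map $\C \to \OO$ is tautologically an isomorphism (constants go to constants on the compact fiber $\Mc$).

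Assembling the pieces: the morphism of spectral sequences gives a commuting square relating the two five-term exact sequences; since the fiber contributions $H^1(\Mc, -)$ vanish after taking $\Z$-cohomology on both sides, the edge map $H^1(\Z, H^0(\Mc, -)) \to H^1(X, -)$ is an isomorphism in each case, and $\zeta$ is identified with the identity $\C \to \C$ on $H^1(\Z, H^0(\Mc,\C)) \to H^1(\Z, H^0(\Mc,\OO_{\Mc}))$. I would also need to invoke Theorem~\ref{mumt} to justify that the group-cohomology computation genuinely computes the sheaf cohomology of $X$ and of $\Mc$ — this requires checking the vanishing of higher cohomology of $\OO$ on the relevant universal covers, which for $\OO$ means using that $\tilde U$-convexity / Stein-type statement from \cite{OtTo} for the pieces $\R_> \times (\C^{n-m}/D)$ and their covers.

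The main obstacle I expect is precisely the computation of $H^q(\Mc, \OO_{\Mc})$ for $q = 0, 1$ together with the $g_0$-action: one must show that the Hodge-theoretic description from the quoted toroidal-group result propagates correctly through the torus bundle $\Zc/D \to \Mc \to T^{2d}$ (a second application of a group-cohomology spectral sequence, now for $\bar\partial$-cohomology with the nilpotent lattice $\Lambda_\Nc$), and that the analogue of the differential $d_2([\phi_{ij}]) = db_j \wedge da_i$ computed in the Betti section kills exactly the same classes in the Dolbeault setting, leaving $H^{0,1}$ of dimension $2d$ with the stated $g_0$-eigenvalues $\alpha, 1/\alpha$. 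The algebraicity hypothesis is what makes this honest rather than formal, via \cite{OtTo}; without it the Dolbeault cohomology of the toroidal fiber need not be finite-dimensional or Hodge-theoretic, and the argument collapses.
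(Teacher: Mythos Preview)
Your spectral-sequence architecture is the right one and matches the paper, but there is a concrete error in the setup: the fibre $\Mc = \Nc/\Lambda_\Nc$ has real dimension $d^2 + 2d$, which is \emph{odd} (since $d$ is odd), so $\Mc$ is not a complex manifold and the sheaf $\OO_\Mc$ does not exist. There is therefore no Leray spectral sequence with $E_2^{p,q} = H^p(\Z, H^q(\Mc, \OO_\Mc))$, and statements about $H^{0,1}_{\bar\partial}(\Mc)$ or a Hodge decomposition on $\Mc$ are meaningless as written.

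The paper repairs this by working purely in group cohomology: set $A = \Gamma(\HH \times \C^{n-1}, \OO)$ as a $\Lambda$-module, invoke Theorem~\ref{mumt} to get $H^1(X,\OO_X) \simeq H^1(\Lambda, A)$, and run the LHS sequence for $\Lambda_\Nc \lhd \Lambda$, so that the relevant term is $H^1(\Lambda_\Nc, A)$ --- equivalently $H^1(G/\Lambda_\Nc, \OO)$ on the \emph{non-compact} complex manifold $G/\Lambda_\Nc \cong \R_{>} \times \Mc$. The crucial difference from your plan is that the paper does \emph{not} compute $H^1(\Lambda_\Nc, A)$ explicitly or show it has dimension $2d$; instead it proves (Lemma~\ref{lemuta}) that the comparison map $H^1(\Lambda_\Nc, \C) \to H^1(\Lambda_\Nc, A)$ is \emph{surjective}. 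That alone suffices: it forces $H^1(\Lambda_\Nc, A)$ to be a finite-dimensional $g_0$-equivariant quotient of $H^1(\Lambda_\Nc, \C)$, on which $g_0$ acts diagonalisably with eigenvalues $\alpha^{\pm 1}$ and hence without fixed vectors, giving $H^1(\Lambda_\Nc, A)^{\langle g_0 \rangle} = 0$. The surjectivity is where the toroidal and algebraic hypotheses actually enter --- via the Otiman--Toma Hodge decomposition for convex domains in toroidal groups and a Kazama--Umeno K\"unneth theorem applied to $\C^{d(m+1)}/D$ --- and it proceeds through a chain of reductions (first to $A^D$, then to affine functions) that is substantially more delicate than the direct Dolbeault computation on a compact fibre you had in mind.
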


\begin{proof}
In order to study $H^1(X, \OO_X)$ we observe that it is, by Mumford's theorem \ref{mumt}, isomorphic to $H^1(\Lambda, A)$, where $A=\Gamma(\OO_X)$ (a left $\Gamma$-module with action defined as $ga=(g^{-1})^*a$, for $g\in\Lambda$ and $a\in A$).
Using the short exact sequence
\[\begin{tikzcd}
	1 & {\Lambda_\Nc} & \Lambda & {\langle g_0 \rangle} & 1
	\arrow[from=1-1, to=1-2]
	\arrow[from=1-2, to=1-3]
	\arrow[from=1-3, to=1-4]
	\arrow[from=1-4, to=1-5]
\end{tikzcd}\]
we may apply the Lyndon-Hochschild-Serre spectral sequence (LHS for short) and get 
\[\begin{tikzcd}
	0 & {H^1(\langle g_0 \rangle, A^{\Lambda_\Nc})} & {H^1(\Lambda,A)} & {H^1(\Lambda_\Nc,A)^{\langle g_0 \rangle}} & {H^2(\langle g_0 \rangle, A^{\Lambda_\Nc})}
	\arrow[from=1-1, to=1-2]
	\arrow[from=1-2, to=1-3]
	\arrow[from=1-3, to=1-4]
	\arrow[from=1-4, to=1-5]
\end{tikzcd}\]
First, we address the first and fourth term of this sequence. The elements of $A^{\Lambda_\Nc}$ are holomorphic functions on the quotient $G/\Lambda_\Nc$; let $f$ be such a function. For fixed $w_2$, $f$ is bounded since the (real) fibers of $G/\Lambda_\Nc\to \Vc/\Lambda_{\Vc}$ are compact, this implies that $f$ does not depend on the $z_{ij}$ coordinates. Moreover, the subgroup generated by $\{a_1^i \vert i\in\{1 \dots 2d\}\}$ in $\R$ is dense so $f$, being continous, is constant with respect to $w_1$, the real part of $w$, therefore $f$ is constant.\par
We have shown that $A^{\Lambda_\Nc}=\C$ (and henceforth the action of $\langle g_0 \rangle$ on it is trivial). Then 
$$H^1(\langle g_0 \rangle, A^{\Lambda_\Nc})=H^1(\Z,\C)=\C \ \ \text{ and } \ \ H^2(\langle g_0 \rangle, A^{\Lambda_\Nc})=H^2(\Z,\C)=0.$$
 We see henceforth that $\zeta$ is injective, for example, by using the four lemma on the following diagram
\[\begin{tikzcd}
	0 & {\mathbb{C}} & {H^1(\Lambda, \mathbb{C})} & 0 \\
	0 & {H^1(\langle g_0 \rangle, A^{\Lambda_\Nc})} & {H^1(\Lambda, A)} & {H^1(\Lambda_\Nc, A)^{\langle g_0 \rangle}}
	\arrow[from=1-1, to=1-2]
	\arrow[from=1-2, to=1-3]
	\arrow[from=1-2, to=2-2, equal]
	\arrow[from=1-3, to=1-4]
	\arrow["\zeta" , from=1-3, to=2-3]
	\arrow[from=1-4, to=2-4]
	\arrow[from=2-1, to=2-2]
	\arrow[from=2-2, to=2-3]
	\arrow[from=2-3, to=2-4]
\end{tikzcd}\]

The surjectivity of $\zeta$ is equivalent to $H^1(\Lambda_\Nc,A)^{\langle g_0 \rangle}=0$. In the following lemma we will show that the map $H^1(\Lambda_\Nc, \C) \to H^1(\Lambda_\Nc, A)$ is surjective. As  $H^1(\Lambda_\Nc, \C)$ is finite-dimensional it follows that $H^1(\Lambda_\Nc,A)$ is finite-dimensional too. Since  the action of $g_0$ on $H^1(\Lambda_\Nc, \C)$  is diagonalisable and has not $1$ as an eigenvalue (see the arguments in the previous section), it follows that the action of $g_0$ on $H^1(\Lambda_\Nc, A)$ also has no fixed points, henceforth $H^1(\Lambda_\Nc, A)^{\langle g_0\rangle }=0.$
\end{proof}

Hence it remains to prove the
\begin{lemma}\label{lemuta}
The following maps are surjective:

\begin{itemize}
\item[i)] $\beta:H^1(F,\C) \to H^1(F, A^D)$
\item[ii)] $\gamma:H^1(D,\C) \to (H^1(D,A))^F$
\item[iii)] $\mu:H^1(\Lambda_\Nc,\C) \to H^1(\Lambda_\Nc, A)$
\end{itemize}

\end{lemma}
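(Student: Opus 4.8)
The plan is to prove the three surjectivity statements in sequence, building up from the abelian pieces $\Zc$ and $F$ toward $\Lambda_\Nc$, and at each stage to use the structural fact established earlier that the relevant cohomology of $\R^{2d}\times T^{d^2}$ (respectively of the appropriate quotient) is represented by \emph{invariant} differential forms, together with the algebraic/toroidal hypothesis which — via the Oeljeklaus–Toma–type result quoted before Proposition \ref{iso} — guarantees that $A^D$ (holomorphic functions descending to the toroidal quotient $\C^{n-m}/D$, suitably extended) satisfies a Hodge-type decomposition so that $H^1$ with coefficients in it is carried by $(1,0)$- and $(0,1)$-parts of closed forms.

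For part (i): identify $A^D$ as the space of holomorphic functions on $G/\Lambda_\Nc$ that are already $D$-invariant; I would first argue, exactly as in the proof of Proposition \ref{iso} (boundedness along the compact real fibers forces independence of the $z_{ij}$, density of the $\{a_i^s\}$ forces constancy in $w_1$), that such functions factor through the toroidal group $\C^{n-m}/D$ together with the $\HH$-coordinate, reducing the computation of $H^1(F, A^D)$ to the cohomology of that toroidal group with its Hodge decomposition. Then $H^1(F,A^D)$ decomposes into a piece coming from $H^1$ of the toroidal fiber (which by the algebraicity hypothesis is spanned by holomorphic and antiholomorphic differentials, hence lifts from $H^1(F,\C)$) and a piece $H^1(F,(A^D)^{\text{const part}})=H^1(F,\C)$ itself; surjectivity of $\beta$ follows because every generator on the right is hit by a de Rham class on the left. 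The computation in the Betti-numbers section — where $H^1(\Mc,\C)=\langle da_i, db_j\rangle$ was obtained by precisely this kind of bar-resolution bookkeeping — is the template.

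For part (ii): here $D\cong\Z^{d^2}$ acts on $A$, and $H^1(D,A)$ is computed by the Koszul/de Rham complex of the $d^2$-torus $\Zc/D$ with values in holomorphic functions; since $D$ acts on $\C^{n-m}$ and the quotient is toroidal and algebraic, $H^1(D,A)$ is again spanned by classes of the form $dz_{ij}$ and $d\bar z_{ij}$ (closed invariant forms), all of which are images of real de Rham classes, giving surjectivity of $\gamma$ before taking $F$-invariants; passing to $F$-invariants on both sides is then harmless because $\gamma$ is $F$-equivariant. For part (iii): apply the five-term exact sequence of the LHS spectral sequence for $1\to D\to\Lambda_\Nc\to F\to 1$ with coefficients in $\C$ and in $A$, obtaining a commuting ladder whose left and right vertical arrows are $\beta$ and $\gamma$ (restricted to $F$-invariants); a diagram chase — the four lemma, as already used for $\zeta$ — upgrades surjectivity of $\beta$ and $\gamma$ to surjectivity of $\mu$.

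The main obstacle I expect is part (i): one must make fully precise \emph{which} subspace of $\C^{n}$ carries the toroidal structure (the definition isolates the variables $z_{ij}$ with $i\neq 0$, i.e.\ the $\C^{n-m}$ in the statement), check that the relevant quotient inherits algebraic periods from $\Lambda$ being of algebraic type, and verify that the $\HH$-direction and the remaining coordinates genuinely contribute nothing new to $H^1$ of $A^D$ beyond the constants — in other words that no exotic holomorphic function on $G/\Lambda_\Nc$ survives $D$-invariance except those pulled back from the toroidal factor. Parts (ii) and (iii) are then comparatively formal: (ii) is the same Hodge-decomposition-plus-invariant-forms argument on a genuine complex torus, and (iii) is pure homological algebra given (i) and (ii).
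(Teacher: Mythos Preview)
There are genuine gaps in parts (ii) and (iii). For (ii), your claim that $\gamma$ is surjective \emph{before} taking $F$-invariants is false: by Mumford's theorem $H^1(D,A)\simeq H^1(\HH\times\C^{n-1}/D,\OO)$, and the paper uses the Kazama--Umeno theorem to identify this with $H^1(\C^{n-m}/D,\OO)\otimes H^0(\HH\times\C^m,\OO)$, which is infinite-dimensional. Only after taking $F$-invariants does the second tensor factor collapse to $\C$ (because $(A^D)^F=\C$); the invariants step is doing essential work, not functorial bookkeeping. For (iii), the four-lemma on the ladder of five-term sequences for $1\to D\to\Lambda_\Nc\to F\to 1$ does not force surjectivity of $\mu$: given $y\in H^1(\Lambda_\Nc,A)$ you can lift $\mathrm{res}(y)$ via $\gamma$ to some $x\in H^1(D,\C)^F$, but you cannot conclude that $x$ lifts to $H^1(\Lambda_\Nc,\C)$ without controlling the transgression $H^1(D,\C)^F\to H^2(F,\C)$ --- you only know its image in $H^2(F,A^D)$ vanishes, and you have no injectivity of $H^2(F,\C)\to H^2(F,A^D)$. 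The paper's argument for (iii) is accordingly much longer: a cocycle computation using (ii) first shows $H^1(\Lambda_\Nc,A^D)\twoheadrightarrow H^1(\Lambda_\Nc,A)$; a separate lemma then reduces to cocycles valued in \emph{affine} functions on $\HH\times\C^m$; and finally an explicit cup-product identity together with the construction of an explicit primitive in $C^0(\Lambda_\Nc,dA)$ handles the affine case.

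Your description of (i) also misidentifies the spaces. Since $\C^{n-m}/D$ is toroidal, $D$-invariant holomorphic functions are constant in the variables $z_{ij}$ with $i\neq 0$, so $A^D=\OO(\HH\times\C^m)$; these functions do not ``factor through'' $\C^{n-m}/D$ in any useful sense. The toroidal object relevant to (i) is therefore a quotient of $\HH\times\C^m$ by part of $F$, not $\C^{n-m}/D$. The paper splits $F\simeq\Z^{2d}$ into two rank-$d$ halves $\langle e_1,\dots,e_d\rangle$ and $\langle e'_1,\dots,e'_d\rangle$ so that $(\HH\times\C^m)/\langle e_1,\dots,e_d\rangle$ is a convex open subset of a toroidal group with algebraic periods, applies the Otiman--Toma Hodge decomposition there to force $\omega(e_i)$ constant, and then uses the cocycle identity $\omega(e_i)+e_i\omega(e'_j)=\omega(e'_j)+e'_j\omega(e_i)$ to propagate constancy to the $e'_j$.
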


\begin{proof}

i)
To begin, recall that $ F$ is generated by $\widehat{g_1} \dots \widehat{g_{2d}}$ and that the action of these on $\HH\times \C^m$ (where  $\C^m$ is the space generated by $z_{01} \dots z_{0m}$) is done by translations. We can rewrite $F$ as the group generated by some  $e_1 \dots e_d$ and respectively $e'_{1} \dots e'_{d}$ where the $e_1 \dots e_d$ are chosen in such a way that
$\HH\times \C^m/ \langle e_1 \dots e_d \rangle $ is a (convex open subset of a) toroidal group. \par 

The toroidality is equivalent to the condition that the action of $\langle e_1 \dots e_d \rangle$ on the real line $Re(w)$ has dense orbits, we see that such a choice is possible because the group generated by $a_1^1 \dots a_1^{2d}$ is dense.

Moreover, since $\Lambda$ was chosen of algebraic type, we see that the entries of the vectors $e_i$ are algebraic numbers (with respect to the canonical basis of $\C\times \C^m$). Then the elements of 
$$H^1(\langle e_1 \dots e_d\rangle, A^D)$$
are images of elements in 
$H^1(\langle e_1 \dots e_d\rangle, \C)$ (cf \cite{OtTo}, Thm. 3.1.).

Let  $\omega \in H^1(F, A^D):$ then its image via the restriction map
$$res: H^1(F, A^D)\to H^1(\langle e_1 \dots e_d\rangle, A^D)$$
 takes constant values, henceforth $\omega$ takes constant values on elements in $\langle e_1,\dots, e_d\rangle$. Hence,  we are left to show that $\omega$ takes also constant values the remaining $e'_1, \dots, e'_d$. But $\omega$ is a cocycle, hence
 $$\omega(e_i+ e'_j)=\omega(e_i)+e_i\omega(e'_j)$$
 and also
 $$\omega(e_i+ e'_j)=\omega(e'_j)+e'_j\omega(e_i)$$
 hence
$$ \omega(e_i)+e_i\omega(e'_j)=\omega(e'_j)+e'_j\omega(e_i)$$
As $\omega(e_i)$ is constant, $e'_j\omega(e_i)=\omega(e_i)$ hence we get
$$ e_i\omega(e'_j)=\omega(e'_j)$$
hence $\omega(e'_j)$ is invariant w.r. to the $e_1 \dots e_d$ hence from the toroidality assumption we get that
$\omega(e'_j)$ is also constant.

\hfill

ii)
$D$ acts by translations on $\C^{d(m+1)}$  where $\C^{d(m+1)}$ is the space of the variables $z_{ij}$ with $i\not=0.$
Since, by our assumptions all the entries of the lattice $D$ are algebraic, $\left(\C^{d(m+1)}/D\right)$ is a toroidal group {\em of finite type}, that is 
all the cohomology groups $$H^i\left(\left(\C^{d(m+1)}/D\right), \OO\right), i\geq 1$$ are finite-dimensional.
Next, notice that 
$\HH\times \C^n/D$ can be written as $(\HH\times \C^m)\times \left(\C^{d(m+1)}/D\right)$
that is, it is  a product between  a domain in $\C^{m+1}$ and a toroidal group.

By Mumford's theorem \ref{mumt}, $H^1(D,A)\simeq H^1(\HH\times \C^{n}/D, \OO_{\HH\times \C^{n}/D})$ and  as $\C^{d(m+1)}/D$ is toroidal of finite type the dimension of $H^i(\C^{d(m+1)}/D, \OO_{\C^{d(m+1)}/D})$ is finite for $i \geq 1$. As before, we know this because of the assumption that the entries al algebraic. To determine $H^1(D,A)$, we will use the following result by Kazama and Umeno (\cite{Kaz}):

\begin{theorem}
Let $T$ be a toroidal group of finite type and $U$ a polydisk in $\C^n$ ($U=\left\{(z_1\dots z_n) \vert |z_i| \leq d_i, i\in\left\{1 \dots n\right\} \right\}$, for some polyradius $(d_1 \dots d_n)$), then
$$H^1(T \times U, \OO_{T \times U}) \simeq H^1(T, \OO_T) \otimes H^0(U, \OO_U)$$
\end{theorem}
In fact, it is easy to see that the above theorem applies also when  the some of the radii $d_i$ are infinite.
Applying the theorem to our case, we get
$$H^1(\HH\times \C^{n}/D, \OO_{\HH\times \C^{n}/D})\simeq H^1(\C^{d(m+1)}/D, \OO_{\C^{d(m+1)}/D})\otimes H^0(\HH\times \C^m, \OO_{\HH\times \C^m}).$$
By taking $F-$invariants, we get that  $H^0(\HH\times \C^m, \OO_{\HH\times \C^m})^F=\C$, 
hence
$$H^1(\C^{d(m+1)}/D, \OO_{\C^{d(m+1)}/D})\simeq (H^1(D, A))^F$$
and the conclusion follows since we know, by the assumption of toroidality of finite type, that we have a surjection
$$H^1(D,\C)\to H^1(\C^{d(m+1)}/D, \OO_{\C^{d(m+1)}/D}).$$

iii) Consider the restriction map $res: H^1(\Lambda_\Nc,A) \to H^1(D,A)^{F}$; by the surjectivity of $\gamma$, every element of $H^1(D,A)^{F}$ is represented by a cocycle of constants. Thus, every element of $H^1(\Lambda_\Nc,A)$ is represented by a cocycle taking constant values on $D$.\par

Let $\omega$ be such a cocycle and let $g \in \Lambda_\Nc$ and $h \in D$, since $h$ is central, we have 
$$\omega(gh)=\omega(hg) \implies g\omega(h)+\omega(g)=h\omega(g)+\omega(h)$$ 
$\omega(h)$ is constant and thus $\omega(g)=h\omega(g)$. So $\omega(g) \in A^D$ (that is, a holomorphic function which depends only on the variables $w, z_{01} \dots z_{0m}$). We have shown that the map $H^1(\Lambda_\Nc, A^D) \to H^1(\Lambda_\Nc, A)$ is surjective.\par
We next study the space $H^1(\Lambda_\Nc, A^D)$. To do this,  consider the diagram:
\begin{equation}\label{diag}
\begin{tikzcd}
	{H^1(F,\mathbb{C})} & {H^1(\Lambda_\Nc,\mathbb{C})} & {H^1(D,\mathbb{C})} \\
	{H^1(F, A^D)} & {H^1(\Lambda_\Nc,A^D)} & {H^1(D,A^D)^{F}} \\
	{H^1(F,\Gamma(d\mathcal{O}_{\mathbb{H}\times\mathbb{C}^m}))} & {H^1(\Lambda_\Nc,\Gamma(d\mathcal{O}_{\mathbb{H}\times\mathbb{C}^m}))} & {H^1(D,\Gamma(d\mathcal{O}_{\mathbb{H}\times\mathbb{C}^m}))^{F}}
	\arrow[from=1-1, to=1-2]
	\arrow["\beta" , from=1-1, to=2-1]
	\arrow[from=1-2, to=1-3]
	\arrow[from=1-2, to=2-2]
	\arrow["\chi", from=1-3, to=2-3]
	\arrow["inf" , from=2-1, to=2-2]
	\arrow["d" , from=2-1, to=3-1]
	\arrow["res" , from=2-2, to=2-3]
	\arrow["d" , from=2-2, to=3-2]
	\arrow["d" , from=2-3, to=3-3]
	\arrow["inf" , from=3-1, to=3-2]
	\arrow["res", from=3-2, to=3-3]
\end{tikzcd}
\end{equation}
First of all, some remarks about the diagram:

\begin{itemize}
\item[a)] The map $\beta$ is surjective, as proven in point (i) of the lemma.

\item[b)] The map $\chi$ is an isomorphism. This is true because $A^D$ is a trivial $D$-module and $D$ is contained in the center of $\Lambda_\Nc$, so, by the definition of the action of $F$ on $H^1(D,A^D)$, the term $H^1(D,A^D)^F$ contains precisely the cocycles taking constant values (as they are the only $F$-invariant functions).

\item[c)] The rows are exact, each being an inflation-restriction sequence (or low degree terms in the LHS spectral sequence) coming from  
\[\begin{tikzcd}
	1 & F & {\Lambda_\Nc} & D & 1
	\arrow[from=1-1, to=1-2]
	\arrow[from=1-2, to=1-3]
	\arrow[from=1-3, to=1-4]
	\arrow[from=1-4, to=1-5]
\end{tikzcd}\]

\item[d)] The columns are exact, for the first two this is clear from the long exact sequence in cohomology induced by 
\[\begin{tikzcd}
	0 & {\mathbb{C}} & {A^D} & {\Gamma(d\mathcal{O}_{\mathbb{H}\times\mathbb{C}^m})} & 0
	\arrow[from=1-1, to=1-2]
	\arrow[from=1-2, to=1-3]
	\arrow[from=1-3, to=1-4]
	\arrow[from=1-4, to=1-5]
\end{tikzcd}\]
seen as a short exact sequence of $F$-modules, or $\Lambda_\Nc$-modules respectively. For the last column, we start with the same short exact sequence, this time treating them as trivial $D$-modules and, after taking cohomology, we take $F$-invariants. This implies that the last column is a complex and since $\chi$ is an isomorphism, we get that it is in fact exact.  
\end{itemize}
Let $\nu$ be a cocycle representing a class in $H^1(\Gamma_H, A^D)$, then, since the bottom-right map labeled $d$ in (\ref{diag})  is zero (this is because $\chi$ is surjective) we have:
$$res(d(\nu)) = d(res(\nu)) = 0$$
Thus, $d(\nu)$ comes from the left (it is contained in the image of the inflation map). By the forthcoming lemma (\ref{lemuca}), every element of $H^1(F, \Gamma(d\OO_{\HH \times \C^m}))$ is represented by a cocycle, taking values in $\langle dw, dz_{01} \dots dz_{0m} \rangle$, so $d(\nu)$ is cohomologous to a cocycle taking values in $\langle dw, dz_{01} \dots dz_{0m} \rangle$. From this, we conclude that, up to cohomology, $\nu$ may be chosen to take values in $\Gamma(\text{Aff}_{\HH \times \C^m})$. We have shown that the maps
$$H^1(\Lambda_\Nc, \Gamma(\text{Aff}_{\HH \times \C^m})) \to H^1(\Lambda_\Nc, A^D) \to H^1(\Lambda_\Nc, A) $$
are both surjective.\par
To finish the proof of point iii) in lemma  \ref{lemuta}, we will study the following diagram:

\begin{equation}\label{diagmic}
{\tiny
\begin{tikzcd}
	{H^1(\Lambda_\Nc,\C)} & {H^1(\Lambda_\Nc, \Gamma (\text{Aff}_{\HH \times \C^m}))} & {H^1(\Lambda_\Nc,\Gamma( d\text{Aff}_{\HH \times \C^m}))} & {H^2(\Lambda_\Nc,\C)} \\
	{H^1(\Lambda_\Nc, \C)} & {H^1(\Lambda_\Nc,A)} & {H^1(\Lambda_\Nc,dA)} & {H^2(\Lambda_\Nc,\C)}
	\arrow[from=1-1, to=1-2]
	\arrow[from=1-1, to=2-1, equal]
	\arrow[from=1-2, to=1-3]
	\arrow[two heads, from=1-2, to=2-2]
	\arrow["\eta", from=1-3, to=1-4]
	\arrow["\epsilon", from=1-3, to=2-3]
	\arrow[from=1-4, to=2-4, equal]
	\arrow[from=2-1, to=2-2]
	\arrow[from=2-2, to=2-3]
	\arrow[from=2-3, to=2-4]
\end{tikzcd}
}
\end{equation}

We want to show that the map $H^1(\Lambda_\Nc, A) \to H^1(\Lambda_\Nc, dA)$ is zero. Since we know that the second vertical map in (\ref{diagmic}) is surjective it is enough to prove that for any class $[\omega] \in H^1(\Lambda_\Nc, \Gamma(d \text{Aff}_{\HH \times \C^m}))$ such that $\eta([\omega]) = 0$, we have $\epsilon([\omega])=0$. Let the representative $\omega$ be defined as follows:
\begin{equation}\label{omega}
\omega(g_t) = S_0^t dw + S_1^t dz_{01} + \dots + S_m^t dz_{0m}\text{, for all }t \in \{1\dots 2d\}
\end{equation}
This implies that 
\begin{equation}\label{state}
\omega(h_s) = 0\text{, for all } s \in \{1 \dots d^2\}
\end{equation}
The cochain $\omega$ is closed and the action of $\Lambda_\Nc$ on $\Gamma(d\text{Aff}_{\HH \times \C})$ is trivial, thus $\omega(g_sg_tg_s^{-1}g_t^{-1})=0$, for all $s,t \in \{1 \dots 2d\}$. As shown in the first section, for each element $h_s$ there exists a positive integer $r$ such that $h_s^r$ is contained in the subgroup generated by the commutators of the $g_t$'s. Thus, the above statement (\ref{state}) is true.

Also, denote by $\omega_i$ the $i$'th component of $\omega$ in the decomposition given by (\ref{omega}),  $\omega_i(g_s)=S_i^s$ (notice that  $[\omega_i]$  can be viewed as a class in $H^1(\Lambda_\Nc, \C)$).
 We may identify the classes $da_i$, $db'_j \in H^1_{DR}(G/\Lambda_\Nc, \C)$ with the group-cohomological classes represented by $\varphi_i$ and $\psi_j$ respectively, where
$$\varphi_i(g_s) = -a_i^s\text{ and }\psi_j(g_s)=-b_j^s.$$
Then, the image of $[\omega]$ through the connecting map $\eta$ is represented by the cocycle
$$(\eta(\omega))(g_s, g_t)= - (S_0^ta_1^s + S_1^t(a_2^s + \sqrt{-1}a_3^s) + \dots + S_m^t(a_{d-1}^s + \sqrt{-1}a_d^s))=$$
$$= \varphi_1 \smile \omega_0 + (\varphi_2 + \sqrt{-1}\varphi_3) \smile \omega_1 + \dots + (\varphi_{d-1} + \sqrt{-1}\varphi_d)\smile \omega_m$$  
where by $\smile$ we have denoted the cup product. Assuming that $\eta(\omega)$ is exact, we get, by the computations in section 1, that 
$$\omega_i=B_1^i \psi_1 + \dots B_d^i \psi_d + A_0^i \varphi_1 + A_1^i(\varphi_2 + \sqrt{-1}\varphi_3)+ \dots + A_m^i(\varphi_{d-1}+\sqrt{-1}\varphi_d)$$
and $A_j^i = A_i^j$. We need to prove that that $\epsilon(\omega)$ is exact. Let $\mu \in C^0(\Lambda_\Nc, dA)$ be the $0$-cochain given by the form:
$$d\left( \sum_{\substack{0\leq i \leq m\\ 1\leq j \leq d}}B_j^iz_{ji}\right) + d\left(\frac{\displaystyle\sum_{1\leq i, j \leq m}A_j^i z_{0i}z_{0j} + \displaystyle\sum_{1\leq i \leq m}(A_i^0 + A_0^i) w z_{0i} + A_0^0 w^2}{2}\right)$$ 
By a simple computation we can see that $\delta(\mu)=\epsilon(\omega)$, this finishes the proof.
\end{proof}

\begin{lemma}\label{lemuca}
Let $\Lambda$ be an additive subgroup of $\C^n$ (not necessarily discrete) and $U \subseteq \C^n$ an open convex subset, invariant under the action of $\Lambda$. If $\Gamma(\OO_U)^\Lambda = \C$ and the map $H^1(\Lambda, \C) \to H^1(\Lambda, \Gamma(\OO_U))$ is surjective, then 
$$
H^1(\Lambda, \Gamma(d{\rm Aff}_U)) \to H^1(\Lambda, \Gamma(d\OO_U))
$$
is surjective. That is, every class in $H^1(\Lambda, \Gamma(d\OO_U))$ is represented by a cocycle taking values in $\langle dz_1 \dots dz_n \rangle$.
\end{lemma}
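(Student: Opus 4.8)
The plan is to reduce the whole statement to a fact about a single holomorphic $1$-form. I would first record two preliminary observations. Since $U$ is convex it is contractible, so every closed holomorphic $1$-form on $U$ has a holomorphic primitive; hence $\Gamma(d\OO_U)$ is exactly the space of closed holomorphic $1$-forms on $U$, and $d\colon\Gamma(\OO_U)\to\Gamma(d\OO_U)$ is surjective with kernel $\C$. Moreover $\Lambda$ acts by translations, which are biholomorphisms of $\C^n$; in particular this action commutes with $d$ and fixes each $dz_i$. Consequently, if $\omega\colon\Lambda\to\Gamma(d\OO_U)$ is a $1$-cocycle written as $\omega(g)=\sum_{i=1}^n\omega_i(g)\,dz_i$ with $\omega_i(g)\in\Gamma(\OO_U)$, then each coefficient $\omega_i\colon\Lambda\to\Gamma(\OO_U)$ is itself a $1$-cocycle; and any $\Lambda$-invariant holomorphic $p$-form on $U$ has constant coefficients, because its coefficients are $\Lambda$-invariant holomorphic functions and so lie in $\Gamma(\OO_U)^\Lambda=\C$.

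Next I would feed in the hypothesis. Since $H^1(\Lambda,\C)\to H^1(\Lambda,\Gamma(\OO_U))$ is onto, each $\omega_i$ is cohomologous to a $\C$-valued cocycle: there are homomorphisms $a_i\colon\Lambda\to\C$ and functions $\phi_i\in\Gamma(\OO_U)$ with $\omega_i(g)=a_i(g)+(g\cdot\phi_i-\phi_i)$. Setting $\theta:=\sum_i\phi_i\,dz_i$ (a holomorphic $1$-form, not a priori closed) and using that $\Lambda$ fixes the $dz_i$, this rewrites as
\[
\omega(g)=\sum_i a_i(g)\,dz_i+(g\cdot\theta-\theta),\qquad g\in\Lambda .
\]
Here $\omega(g)$ is closed and $\sum_i a_i(g)\,dz_i$ has constant coefficients, hence is closed, so $g\cdot\theta-\theta$ is closed for every $g$; that is, $g\cdot(d\theta)=d\theta$ for all $g$, and by the remark above $d\theta$ is a constant-coefficient holomorphic $2$-form $\Omega$.

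Finally I would absorb $\Omega$. This rests on the elementary identity that every constant holomorphic $2$-form on $\C^n$ can be written as $\sum_{i=1}^n\beta_i\wedge dz_i$ for suitable constant holomorphic $1$-forms $\beta_i=\sum_j\beta_{ij}\,dz_j$: indeed $\sum_i(\sum_j\beta_{ij}\,dz_j)\wedge dz_i=\sum_{j<i}(\beta_{ij}-\beta_{ji})\,dz_j\wedge dz_i$, and $(\beta_{ij}-\beta_{ji})$ ranges over all antisymmetric matrices. Choose such $\beta_i$ with $\sum_i\beta_i\wedge dz_i=\Omega$, let $\psi_i\in{\rm Aff}_U$ be the linear function with $d\psi_i=\beta_i$, and put $\theta':=\theta-\sum_i\psi_i\,dz_i$ and $a_i'(g):=a_i(g)+(g\cdot\psi_i-\psi_i)$. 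Because $\psi_i$ is affine, $g\cdot\psi_i-\psi_i$ is a constant depending additively on $g$, so each $a_i'$ is again a homomorphism $\Lambda\to\C$; a direct computation gives $\omega(g)=\sum_i a_i'(g)\,dz_i+(g\cdot\theta'-\theta')$, and $d\theta'=\Omega-\sum_i\beta_i\wedge dz_i=0$. Thus $\theta'$ is a closed holomorphic $1$-form, i.e. $\theta'\in\Gamma(d\OO_U)$, and the last display exhibits $\omega$ as cohomologous, in $H^1(\Lambda,\Gamma(d\OO_U))$, to the cocycle $g\mapsto\sum_i a_i'(g)\,dz_i$, which takes values in $\langle dz_1,\dots,dz_n\rangle=\Gamma(d{\rm Aff}_U)$. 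Since $\omega$ was arbitrary, this gives the asserted surjectivity.

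The only genuine obstacle in this argument is the constant $2$-form $\Omega=d\theta$: nothing forces the naive primitive $\theta$ to be closed, and the key point is that the residual freedom in the decomposition of the $\omega_i$ — adding to $a_i$ the (automatically $\C$-valued) coboundary of an affine function — shifts $\Omega$ by an arbitrary $\sum_i\beta_i\wedge dz_i$, hence by an arbitrary constant $2$-form, so $\Omega$ can be cancelled. Everything else is routine bookkeeping with the bar differential.
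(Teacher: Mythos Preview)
Your proof is correct and follows essentially the same route as the paper's: both decompose the cocycle componentwise via the surjectivity hypothesis, observe that the resulting primitive $1$-form (the paper's tuple $(f_1,\dots,f_n)$, your $\theta$) has $\Lambda$-invariant and hence constant exterior derivative, and then absorb this constant $2$-form by subtracting off linear functions (the paper's $(f_1',\dots,f_n')$, your $\psi_i$) so that the remainder is closed and contributes only a coboundary in $\Gamma(d\OO_U)$. The only difference is notational: the paper phrases the key step as solving the linear system $\partial_s f_t-\partial_t f_s=c_{st}$ with linear $f_i'$, while you phrase it as writing the constant $2$-form $\Omega$ as $\sum_i\beta_i\wedge dz_i$; these are the same observation.
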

 
\begin{proof}
We know that $\Gamma(\Omega^1_U)$ and  $(\Gamma(\OO_U))^n$  are isomorphic as vector spaces, and since $\Lambda$ acts by translations, they are also isomorphic as $\Lambda$-modules. Consider the diagram:
\[\begin{tikzcd}
	& {H^1(\Lambda, \C^n)} \\
	{H^1(\Lambda, \Gamma(d\OO_U))} & {H^1(\Lambda, (\Gamma(\OO_U))^n)} \\
	& 0
	\arrow["\phi" ', from=1-2, to=2-1]
	\arrow[from=1-2, to=2-2]
	\arrow[from=2-1, to=2-2]
	\arrow[from=2-2, to=3-2]
\end{tikzcd}\]
The horizontal map is the one induced by the inclusion $\Gamma(d\OO_U) \to \Gamma(\Omega^1_U) \simeq (\Gamma(\OO_U))^n$. Then $\phi$ is induced by the map $\C^n \to \Gamma(d\OO_U)$, sending $(a_1 \dots a_n)$ to $a_1 dz_1 + \dots + a_ndz_n$. We have to show that $\phi$ is surjective.\par
The vertical map is surjective by hypotheis, hence if $\nu$ is a cocycle representing a class in $H^1(\Lambda, (\Gamma(\OO_U))^n)$ then
$$
\nu(g) = (a_1^g \dots a_n^g) + (g(f_1 \dots f_n) - (f_1 \dots f_n))\text{, for all } g \in \Lambda
$$
Here $g \to (a_1^g \dots a_n^g)$ is cocycle in $H^1(\Lambda, \C^n)$ and $f_1 \dots f_n$ are holomorphic functions not depending on $g$. If $\nu$ comes from $H^1(\Lambda, \Gamma(d\OO_U))$ then 
$$\frac{\partial}{\partial z_s} (gf_t - f_t) = \frac{\partial}{\partial z_t} (gf_s - f_s)\text{, for all }s,t \in \{1 \dots n\} $$
As $g$ is a translation, it commutes with the partial derivatives so we get
$$g\left(\frac{\partial f_t}{\partial z_s}-\frac{\partial f_s}{\partial z_t}\right) = \frac{\partial f_t}{\partial z_s}-\frac{\partial f_s}{\partial z_t}$$
Thus, $\frac{\partial f_t}{\partial z_s}-\frac{\partial f_s}{\partial z_t}$ is a holomorphic function, invariant under the action of $\Lambda$ and therefore constant.\par
The linear system formed by the equations $\frac{\partial f_t}{\partial z_s}-\frac{\partial f_s}{\partial z_t} = c_{st}$ admits a solution $(f_1' \dots f_n')$ with $f_1' \dots f_n'$ linear. And the assciated system of homogenous equations $\frac{\partial f_t}{\partial z_s}-\frac{\partial f_s}{\partial z_t}=0$ has solutions of the form $(\frac{\partial F}{\partial z_1} \dots \frac{\partial F}{\partial z_n})$, where $F$ is a holomorphic function on $U$.\par
From the above computation, we get that
$$\nu(g) = (a_1^g \dots a_n^g) + (g(f_1' \dots f_n') - (f_1' \dots f_n')) + (g\left(\frac{\partial F}{\partial z_1} \dots \frac{\partial F}{\partial z_n}\right) - \left(\frac{\partial F}{\partial z_1} \dots \frac{\partial F}{\partial z_n}\right))$$
The first two terms are cocycles taking constant values and the last term is exact.
\end{proof}

Next, we will prove the following corollary of proposition (\ref{iso})

\begin{corollary}
\begin{itemize}
\item[1)] Let $X$ be a manifold with the same properties as in proposition \ref{iso}, then all line bundles on $X$ are represented by a cocycle of constants.
\item[2)] The manifold $X$ does not contain any hypersurface.
\end{itemize}
\end{corollary}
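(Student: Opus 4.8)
I would deduce part 1) from Proposition \ref{iso} together with the vanishing $H^2(X,\C)=0$ (which holds because $b_2(X)=0$). Compare the exponential sequence of constant sheaves $0\to\Z\to\C\to\C^*\to0$ with that of the holomorphic sheaves $0\to\Z\to\OO_X\to\OO_X^*\to0$; the sheaf inclusions induce a commutative ladder between the two long exact cohomology sequences in which the map $H^1(X,\C)\to H^1(X,\OO_X)$ is the isomorphism $\zeta$ of Proposition \ref{iso}, the maps on $H^\bullet(X,\Z)$ are identities, and $H^2(X,\C)=0$. A short diagram chase then shows that $H^1(X,\C^*)\to H^1(X,\OO_X^*)=\mathrm{Pic}(X)$ is surjective: given a line bundle $L$, its Chern class $c_1(L)\in H^2(X,\Z)$ dies in $H^2(X,\OO_X)$ by exactness, hence, since $H^2(X,\C)=0$, it lies in the image of $H^1(X,\C^*)\to H^2(X,\Z)$, say $c_1(L)=\delta(\xi)$; twisting $L$ by the image of $\xi$ produces a class with trivial Chern class, which therefore comes from $H^1(X,\OO_X)$, hence from $H^1(X,\C)$ because $\zeta$ is onto, hence from $H^1(X,\C^*)$. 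Thus every line bundle on $X$ is given by locally constant transition functions.

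For part 2), suppose $Y\subset X$ is a hypersurface, let $\pi\colon G\to X$ be the universal covering and $\tilde Y=\pi^{-1}(Y)$, a nonempty $\Lambda$-invariant hypersurface of $G$. Since $G\cong\HH\times\C^{n-1}$ is Stein and contractible, $\mathrm{Pic}(G)=0$, so $\tilde Y$ is the divisor of a global holomorphic function $\tilde s\in\OO(G)$, $\tilde s\not\equiv0$. Its $\Lambda$-invariance makes $\gamma\mapsto(\gamma\cdot\tilde s)/\tilde s$ a cocycle with values in $\OO(G)^*$ representing $\OO_X(Y)$ under the isomorphism $\mathrm{Pic}(X)\cong H^1(\Lambda,\OO(G)^*)$ of Theorem \ref{mumt} (applied to $\F=\OO_X^*$, whose pullback $\OO_G^*$ has vanishing higher cohomology on the Stein contractible $G$); by part 1) this cocycle is cohomologous to a cocycle of constants, so after dividing $\tilde s$ by a suitable nowhere-vanishing holomorphic function — which does not change $\tilde Y$ — I may assume $\gamma\cdot\tilde s=\chi(\gamma)\,\tilde s$ for a character $\chi\colon\Lambda\to\C^*$. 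Moreover $|\chi|\equiv1$ on $\Lambda_\Nc$: the homomorphism $\log|\chi|$ lies in $\mathrm{Hom}(\Lambda,\R)$, a one-dimensional space because $b_1(X)=1$, hence it is a multiple of the homomorphism $\Lambda\to\Z=\Lambda/\Lambda_\Nc$ recording the power of $g_0$, which vanishes on $\Lambda_\Nc$.

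Consequently $|\tilde s|$ is $\Lambda_\Nc$-invariant. Since $\Lambda_\Nc$ preserves each affine slice $G_t=\{\Ima w=t\}\cong\R\times\C^{n-1}$ and acts cocompactly on it (the quotient being a compact nilmanifold, a copy of $\Mc$), we get $\sup_{G_t}|\tilde s|<\infty$ for every $t>0$; hence for fixed $w\in\HH$ the entire function $z\mapsto\tilde s(w,z)$ on $\C^{n-1}$ is bounded, so constant, and $\tilde s=f(w)$ depends on $w$ alone. On the $w$-coordinate $\Lambda_\Nc$ acts by the real translations $w\mapsto w+a_1^k$, $k=1,\dots,2d$ (the central generators act trivially on $w$), and the subgroup of $\R$ generated by $a_1^1,\dots,a_1^{2d}$ is dense, as already used in Section 3 and in the proof of Proposition \ref{iso}. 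Thus $|f|$, being continuous and invariant under a dense group of horizontal translations, depends only on $\Ima w$; were $f$ to vanish at some $w_0$ it would vanish on the whole line $\{\Ima w=\Ima w_0\}$ and hence, by the identity theorem, identically, contradicting $\tilde s\not\equiv0$. Therefore $\tilde Y=\emptyset$, contradicting the existence of $Y$.

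The main obstacle is the normalization step in the second paragraph: passing from the mere fact that $\OO_X(Y)$ has locally constant transition functions to a genuinely $\chi$-equivariant defining function $\tilde s$ on $G$, which requires $\mathrm{Pic}(G)=0$, the identification $\mathrm{Pic}(X)\cong H^1(\Lambda,\OO(G)^*)$, and the triviality of nowhere-zero holomorphic functions in $H^1(G,\OO_G^*)$ so that cocycles can be adjusted. Once this is in place, the computation that $|\chi|\equiv1$ on $\Lambda_\Nc$, the Liouville argument on the $\C^{n-1}$-slices, and the density argument on $\HH$ are all routine.
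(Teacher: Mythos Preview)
Your proof is correct. For part 1) it coincides with the paper's argument (the two exponential sequences together with Proposition \ref{iso} and $b_2=0$; the paper simply invokes the five lemma where you do the diagram chase by hand, and in fact concludes that $H^1(X,\C^*)\to H^1(X,\OO_X^*)$ is an isomorphism, not just surjective).

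For part 2) the overall strategy is the same --- represent $\OO_X(Y)$ by a character $\chi\colon\Lambda\to\C^*$, show the $\chi$-equivariant section is forced to be constant by the Liouville/density argument on slices --- but the paper takes a slightly shorter path to $\Lambda_\Nc$-invariance. Rather than using $b_1(X)=1$ to get $|\chi|\equiv1$ on $\Lambda_\Nc$, the paper observes that every $h\in\Lambda_\Nc$ has \emph{finite order} in $\Lambda/[\Lambda,\Lambda]$; hence $\chi|_{\Lambda_\Nc}$ lands in roots of unity and there is a single $n$ with $\chi(h)^n=1$ for all $h\in\Lambda_\Nc$. Then $\tilde s^{\,n}$ (not merely $|\tilde s|$) is $\Lambda_\Nc$-invariant, so one can quote verbatim the constancy argument already given inside the proof of Proposition \ref{iso} rather than reproving the Liouville and density steps. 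Your version with $|\chi|=1$ is equally valid and perhaps more elementary; the paper's version has the advantage of recycling an argument already on the page.
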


\begin{proof}
\begin{itemize}
\item[1)]
Consider the two exponential sequences on $X$
\[\begin{tikzcd}
	0 & {\mathbb{Z}} & {\mathbb{C}} & {\mathbb{C}^{*}} & 0 \\
	0 & {\mathbb{Z}} & {\mathcal{O}_X} & {\mathcal{O}_X^{*}} & 0
	\arrow[from=1-1, to=1-2]
	\arrow[from=1-2, to=1-3]
	\arrow[from=1-3, to=1-4]
	\arrow[from=1-4, to=1-5]
	\arrow[from=2-1, to=2-2]
	\arrow[from=2-2, to=2-3]
	\arrow[from=2-3, to=2-4]
	\arrow[from=2-4, to=2-5]
	\arrow[from=1-2, to=2-2, equal]
	\arrow[from=1-3, to=2-3]
	\arrow[from=1-4, to=2-4]
\end{tikzcd}\]
by going to cohomology, we get:
\[\begin{tikzcd}
	{H^1(X,\mathbb{Z})} & {H^1(X,\mathbb{C})} & {H^1(X,\mathbb{C}^{*})} & {H^2(X,\mathbb{Z})} & {H^2(X,\mathbb{C})} \\
	{H^1(X,\mathbb{Z})} & {H^1(X,\mathcal{O}_X)} & {H^1(X,\mathcal{O}_X^{*})} & {H^2(X,\mathbb{Z})} & {H^2(X,\mathcal{O}_X)}
	\arrow[from=1-1, to=1-2]
	\arrow[from=1-2, to=1-3]
	\arrow[from=1-3, to=1-4]
	\arrow[from=1-4, to=1-5]
	\arrow[from=2-1, to=2-2]
	\arrow[from=2-2, to=2-3]
	\arrow[from=2-3, to=2-4]
	\arrow[from=2-4, to=2-5]
	\arrow[from=1-2, to=2-2]
	\arrow[from=1-3, to=2-3]
	\arrow[from=1-5, to=2-5]
	\arrow[from=1-4, to=2-4, equal]
	\arrow[from=1-1, to=2-1, equal]
\end{tikzcd}\]

As seen in proposition \ref{iso}, the second vertical arrow is an isomorphism, the fifth arrow is injective since $b_2(X)=0$. By the five lemma, the map $H^1(X,\C^*)\to H^1(X,\OO_X^{*})$ is an isomorphism. This means that every line bundle is described by a cocycle of constants.
\item[2)]
Let $L$ be a line bundle and let $\mu \in Hom(\Lambda, \C^{*})\simeq H^1(X, \C^*)$ be the character of $\Lambda$ representing $L$. A holomorphic section of $L$ is a holomorphic function $f\in \Gamma(\OO_G)$ satisfying $g^{*}f=\mu(g)f$, for all $g\in \Lambda$. For any $h\in \Lambda_H$, the order of $h$ in $\Lambda/[\Lambda, \Lambda]$ is finite, so $\mu(h)$ is of finite order in $\C^{*}$. Let $n$ be such $\mu(h)^n=1$ for all $h\in \Lambda_H$, then $f^n$ is invariant under the action of $\Lambda_H$, as shown in the proof of proposition \ref{iso}, this implies that $f^n$ is constant, so $f$ is also constant. Then, if $L$ is not trivial, it does not admit any sections, so there are no hypersurfaces contained in $X$.
\end{itemize}
\end{proof}

\section{Metric aspects}

It is easy to see that the Lie algebra ${\mathfrak g}$ of $G$ has  a natural basis 
$$\{T,A_1\dots A_d, B_1 \dots B_d, C_{ij}\vert i,j \in \{1\dots d\}\}$$
 with the following commutator relations:
$$[A_i,A_j]=0\ \ \ [B_i,B_j]=0\ \ \ [B_j,A_i]=C_{ij}\ \ \ [T,A_i]=A_i\ \ \ [T,B_i]=-B_i$$
and the generators $C_{ij}$ are all central. 
\begin{remark}
Keeping in mind the notation used in \ref{elgen}, the elements of the basis above are given by: $T=\frac{\partial}{\partial \alpha}$, $A_i=\frac{\partial}{\partial a_i}$, $B_j=\frac{\partial}{\partial b_j}$ and $C_{ij}=\frac{\partial}{\partial c_{ij}}$.
\end{remark}
Considering the identification
$$
\begin{pmatrix}
I_d & \rvline & \begin{matrix} b_d \\ \vdots \\ b_1 \end{matrix} & \rvline & 
\begin{matrix}
c_{1d} & \dots & c_{dd}\\
\vdots & \ddots & \vdots\\
c_{11} & \dots & c_{d1}\\
\end{matrix}\\
\hline
0 & \rvline & \alpha & \rvline & 
\begin{matrix}
a_1 & \dots & a_d
\end{matrix}\\
\hline
0 & \rvline & 0 & \rvline& I_d
\end{pmatrix} = \begin{pmatrix}
I_d & \rvline & \begin{matrix} y_{d0} \\ \vdots \\ y_{10} \end{matrix} & \rvline & 
\begin{matrix}
x_{d0} & y_{d1} & x_{d1} & \dots & y_{dm} & x_{dm}\\
\vdots & \vdots & \vdots & \ & \vdots & \vdots\\
x_{10} & y_{11} & x_{11} & \dots & y_{1m} & x_{1m}\\
\end{matrix}\\
\hline
0 & \rvline & w_2 & \rvline & 
\begin{matrix}
w_1 & y_{01} & x_{01} & \dots & y_{0m} & x_{0m}
\end{matrix}\\
\hline
0 & \rvline & 0 & \rvline& I_d
\end{pmatrix}
$$
we may rename the generators of the Lie algebra ${\mathfrak g}$ such that $W_i=\frac{\partial}{\partial w_i}$, $X_{ij}=\frac{\partial}{\partial x_{ij}}$ and $Y_{ij}=\frac{\partial}{\partial y_{ij}}$. \par
In terms of this notation, the left-invariant complex structure is defined by: 
$$\J(W_1)=W_2\text{, }\J(W_2)=-W_1\text{, }\J(X_{ij})=Y_{ij}\text{, }\J(Y_{ij})=-X_{ij}$$
Alternatively, we may write these relations as:
$$\J(A_1)=T \text{, }\J(T)=-A_1$$
$$\J(A_{2s+1})=A_{2s}\text{, }\J(A_{2s})=-A_{2s+1}\text{, for } s \in \{1 \dots \frac{d-1}{2}\}$$
$$\J(C_{1t})=B_t\text{, }\J(B_t)=-C_{1t}\text{, for } t \in \{1 \dots d\}$$
$$\J(C_{2s+1, t})=C_{2s, t} \text{, }\J(C_{2s,t})=-C_{2s+1,t}\text{, for }s \in \{1 \dots \frac{d-1}{2}\}\text{ and } t \in \{1 \dots d\}$$
For the rest of this section, we will use both ways to denote the basis of ${\mathfrak g}$, in order to simplify the computations.
\begin{definition}
Let $(M,h)$ be a complex hermitian manifold, the metric is called locally conformally K\"ahler (LCK) if the associated form $\omega$ satisfies 
\begin{equation}\label{lck}
d\omega=\theta \wedge \omega
\end{equation}
for some closed $1$-form $\theta$, called the Lee form.
\end{definition}
For more details see \cite{OrVe}.

\begin{remark}
In the case $d=1$, there is an LCK metric on manifolds of type $X_{d,\Lambda}$, that is the Tricerri LCK metric on surfaces of type $S^{(+)}_{N,p,q,r;0}$ \cite{Tr}.
\end{remark}

\begin{proposition}
For $d > 1$, there is no LCK metric on the manifolds $X_{\Lambda, d}$. 
\end{proposition}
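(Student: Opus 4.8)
The plan is to reduce the question to left-invariant data on the Lie algebra $\mathfrak g$. First I would recall the standard averaging argument: if $X_{d,\Lambda}$ carries an LCK metric, then averaging the Hermitian form $\omega$ and the Lee form $\theta$ over the compact group $G/\Lambda$ — more precisely, using that $G$ is a solvable unimodular Lie group (one checks unimodularity from the commutator relations, since $\operatorname{ad}_T$ has trace $d - d = 0$ on the nilradical and everything else is nilpotent) and that $\Lambda$ is cocompact — produces a left-invariant LCK structure with respect to the same complex structure $\J$. So it suffices to show that the Lie algebra $(\mathfrak g, \J)$ admits no left-invariant LCK metric, i.e. no positive $\J$-Hermitian $2$-form $\omega \in \bigwedge^2 \mathfrak g^*$ and no closed $\theta \in \mathfrak g^*$ with $d\omega = \theta \wedge \omega$.

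Next I would pin down the closed $1$-forms. Using the dual coframe $\{T^*, A_i^*, B_i^*, C_{ij}^*\}$ and the Chevalley–Eilenberg differential dual to the bracket relations, one computes $dT^* = 0$, $dA_i^* = -T^* \wedge A_i^*$, $dB_i^* = T^* \wedge B_i^*$, and $dC_{ij}^* = -B_j^* \wedge A_i^*$ (signs to be verified against the stated brackets $[T,A_i]=A_i$, $[T,B_i]=-B_i$, $[B_j,A_i]=C_{ij}$). Hence $H^1(\mathfrak g) = \langle T^*\rangle$, so any closed left-invariant $1$-form is $\theta = cT^*$ for a constant $c$; the case $c = 0$ forces $\omega$ closed, i.e. the metric would be Kähler, contradicting the corollary that $X_{d,\Lambda}$ is non-Kähler (or more directly $b_2 = 0$ while a left-invariant Kähler form would be a nonzero class). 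So I may assume $\theta = cT^*$ with $c \neq 0$, and after rescaling, $\theta = T^*$.

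The heart of the argument is then the algebraic incompatibility of $d\omega = T^* \wedge \omega$ with positivity when $d > 1$. Write $\omega$ in the $\J$-adapted basis; the relation $d\omega = T^*\wedge\omega$ constrains the components of $\omega$. A clean way to exploit this: contract with $T$. Since $\iota_T(T^* \wedge \omega) = \omega - T^* \wedge \iota_T\omega$ and $\iota_T d\omega + d\iota_T\omega = L_T\omega$, one gets $L_T\omega = \omega - T^*\wedge\iota_T\omega + d\iota_T\omega$, and iterating or restricting to the subspace spanned by the $A_i, B_i, C_{ij}$ one finds that the "horizontal" part $\omega_0$ of $\omega$ (the part with no $T^*$ factor) must satisfy $L_T \omega_0 = \omega_0$ modulo lower-order terms. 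But $L_T = \operatorname{ad}_T^*$ acts with eigenvalue $+1$ on the $A_i^*$, $-1$ on the $B_i^*$, $0$ on the $C_{ij}^*$ and $T^*$; hence on $\bigwedge^2$ of the horizontal space the eigenvalues of $L_T$ lie in $\{-2,-1,0,1,2\}$, and the eigenvalue-$1$ eigenspace is spanned only by terms of type $A_i^* \wedge C_{kj}^*$ and $T^* $-free combinations $A_i^* \wedge (\text{degree-}0)$ — crucially it contains no term pairing $A_i^*$ with $B_j^*$ and no term $A_i^*\wedge A_j^*$, $B_i^*\wedge B_j^*$. I would then check that a $2$-form supported on such eigenspaces cannot be $\J$-invariant and positive: the $\J$-invariance relations (e.g. $\J A_1 = T$, $\J A_{2s+1} = A_{2s}$, $\J B_t = -C_{1t}$, $\J C_{2s+1,t} = C_{2s,t}$) force $\omega(A_1, \J A_1) = \omega(A_1, T)$ to be positive and tie the $A$–$A$, $B$–$B$, $A$–$B$ blocks to the $C$-blocks in a way that is incompatible with vanishing of exactly those blocks when $d>1$ — whereas for $d = 1$ there are no $A_{2s}, A_{2s+1}$ pairs and the only $C$ is $C_{11}$, so the obstruction degenerates and the Tricerri metric survives.

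The main obstacle I anticipate is the bookkeeping in this last step: organizing the components of $\omega$ by $\operatorname{ad}_T^*$-weight, carefully extracting which components are forced to vanish by $d\omega = T^*\wedge\omega$, and then showing the surviving components cannot assemble into a positive $(1,1)$-form — this requires being careful with the $\J$-pairings listed in the excerpt and isolating precisely where "$d>1$" is used (namely the existence of the index $s$, i.e. of the pairs $(A_{2s}, A_{2s+1})$ and the corresponding $(C_{2s,t}, C_{2s+1,t})$). I would structure it as: (1) averaging reduction; (2) $H^1$ computation and disposal of $\theta = 0$; (3) weight decomposition under $\operatorname{ad}_T$; (4) vanishing of the offending blocks of $\omega$; (5) contradiction with positivity for $d>1$, with a remark that the argument breaks exactly at $d=1$.
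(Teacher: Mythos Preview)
Your reduction to left-invariant data and the identification $\theta = cT^{*}$ are correct and match the paper's opening moves. After that point, however, the paper takes a much shorter route than your weight-decomposition plan, and the step you flag as the ``main obstacle'' is precisely what the paper's argument sidesteps.

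The paper simply evaluates the LCK identity $d\omega(A,B,C) = (\theta\wedge\omega)(A,B,C)$ on the single triple $A = T = W_2$, $B = X_{11}$, $C = Y_{11} = \J X_{11}$. When $d>1$ (equivalently $m\geq 1$) both $X_{11}$ and $Y_{11}$ are of type $C_{ij}$, hence central, so every bracket among $T,X_{11},Y_{11}$ vanishes and the left side is zero. On the right side only $\theta(T)\,\omega(X_{11},Y_{11})$ survives, and $\omega(X_{11},\J X_{11}) = g(Y_{11},Y_{11}) > 0$ by positivity. Hence $\theta(T)=0$, so $\theta\equiv 0$, so $\omega$ is left-invariant K\"ahler, contradicting the fact that $X_{d,\Lambda}$ is non-K\"ahler. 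The role of the hypothesis $d>1$ is completely transparent here: it guarantees a $\J$-conjugate pair of \emph{central} vectors. At $d=1$ the only central element is $C_{11}$, and $\J C_{11} = B_1$ is not central, so the argument collapses --- consistently with the existence of the Tricerri metric.

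Your $\operatorname{ad}_T$-weight analysis is not wrong in spirit, but you stop at ``I would then check\ldots'', and the intermediate relation $L_T\omega_0 = \omega_0$ modulo lower terms needs to be made precise (note also a sign: $\operatorname{ad}_T^{*}$ acts with eigenvalue $-1$ on $A_i^{*}$, not $+1$). Rather than organizing all of $\omega$ by weight and chasing which blocks must vanish, the single evaluation above extracts directly the one scalar constraint $\theta(T)=0$ that finishes the proof.
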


\begin{proof}
Let $X$ denote the manifold $X_{\Lambda,d}$ with $d>1$. By the same arguments as in \cite{belgun}, if $X$ has an LCK metric, then it induces a left-invariant LCK metric on $G$. Thus, it is enough to show that such structure cannot exist.
Assume $(\omega, \theta)$ is a left-invariant LCK structure on $G$ and let $g$ be the Riemannian metric associated to $\omega$. The following computations are done at the level of the Lie algebra $\frak{g}$. Since $\theta$ is closed, it vanishes on commutators, henceforth $\theta(A_i)=\theta(B_j)=\theta(C_{ij})=0$ for all $i,j.$ We prove that $\theta(T)=0$ as well.
To do this, we apply the LCK identity \ref{lck}
$$(d\omega)(A, B, C)=(\theta\wedge\omega)(A, B, C)$$
for $A=W_2 (=T), B=X_{11}, C=Y_{11}.$ By direct computation, the LHS vanishes, while the RHS equals
$\theta(W_2)\omega(X_{11}, Y_{11})$. But as $Y_{11}=\J(X_{11})$ we get that $\omega(X_{11}, Y_{11})=g(Y_{11}, Y_{11})>0$, which implies $\theta(W_2)=0.$

Hence, $\omega$ is a left-invariant K\"ahler form on $G$. This implies that $X$ is K\"ahler, which is a contradiction
\end{proof}

\begin{definition}
Let $(M,h)$ be a complex hermitian manifold of dimension $n$, the metric is called locally conformally balanced (LCB) if the associated form $\omega$ satisfies 
$$d\omega^{n-1}=\theta\wedge\omega^{n-1}$$
for some closed $1$-form $\theta$.
\end{definition}

\begin{proposition}
The manifolds $X_{\Lambda,d}$ admit a left-invariant LCB metric. 
\end{proposition}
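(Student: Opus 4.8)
The plan is to exhibit an explicit left-invariant LCB metric on $G$ and then push it down to $X_{\Lambda,d}$. I would take the metric $g$ for which the basis $\{T,A_1,\dots,A_d,B_1,\dots,B_d,C_{ij}\}$ of $\mathfrak g$ is orthonormal; since $\J$ permutes this basis up to sign it is $g$-orthogonal, so $g$ is Hermitian. Let $\tau,\alpha_i,\beta_i,\gamma_{ij}$ denote the dual coframe. Using $d\lambda(X,Y)=-\lambda([X,Y])$ on left-invariant forms together with the commutator relations listed above, the first step is to record the structure equations $d\tau=0$, $d\alpha_i=-\tau\wedge\alpha_i$, $d\beta_i=\tau\wedge\beta_i$, $d\gamma_{ij}=\alpha_i\wedge\beta_j$. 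In particular $\tau$ is closed, and this $\tau$ will be the Lee form.

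Reading off $\J$ on the basis, the fundamental form is $\omega=\omega_0+\omega_1+\omega_2$ with $\omega_0=\alpha_1\wedge\tau+\sum_{s}\alpha_{2s+1}\wedge\alpha_{2s}$, $\omega_1=\sum_{t}\gamma_{1t}\wedge\beta_t$, $\omega_2=\sum_{s,t}\gamma_{2s+1,t}\wedge\gamma_{2s,t}$. Altogether $\omega=\eta_1+\dots+\eta_n$ is a sum of $n$ ``elementary'' $2$-forms, each $\eta_k$ a wedge of two coframe vectors, with the $n$ pairs pairwise disjoint and exhausting the $2n$ coframe vectors (here $\eta_1:=\alpha_1\wedge\tau$ is the block containing $\tau$). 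Hence $\omega^{m}=m!\sum_{|S|=m}\prod_{k\in S}\eta_k$, and a coframe monomial $\lambda_1\wedge\dots\wedge\lambda_r$ whose factors meet $j$ distinct blocks satisfies $\lambda_1\wedge\dots\wedge\lambda_r\wedge\omega^{n-2}=0$ as soon as $j\ge 3$, since the surviving subsets $S$ must avoid those $j$ blocks, forcing $|S|\le n-j<n-2$. From the structure equations one gets $d\omega_0=-2\,\tau\wedge\omega_0$ and $d\omega_1=\tau\wedge\omega_1$, whereas every term of $d\omega_2$ is a product of three coframe vectors coming from three distinct blocks (one $\alpha$-block, the block $\gamma_{1t}\wedge\beta_t$, and one $\gamma$-block), so $d\omega_2\wedge\omega^{n-2}=0$.

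The last step is the bookkeeping for $d\omega^{n-1}=(n-1)\,\omega^{n-2}\wedge d\omega$ and for $\tau\wedge\omega^{n-1}$. One checks that $\tau\wedge\omega^{n-2}\wedge\eta_j=(n-2)!\,\Omega$ for $j\neq 1$ and $=0$ for $j=1$, where $\Omega:=\tau\wedge\eta_2\wedge\dots\wedge\eta_n$ is the nonzero $(2n-1)$-form obtained by deleting $\alpha_1$; thus each of the two quantities is a multiple of $\Omega$ whose coefficient merely counts the blocks of $\omega_0,\omega_1,\omega_2$ different from $\eta_1$. This gives $d\omega^{n-1}=(n-1)\bigl(-2\cdot\tfrac{d-1}{2}+d\bigr)(n-2)!\,\Omega=(n-1)!\,\Omega$ (using $-2\cdot\tfrac{d-1}{2}+d=1$) and $\tau\wedge\omega^{n-1}=\bigl(\tfrac{d-1}{2}+d+\tfrac{d(d-1)}{2}\bigr)(n-2)!\,\Omega$; since $\tfrac{d-1}{2}+d+\tfrac{d(d-1)}{2}=\tfrac{(d+1)^2-2}{2}=n-1$, also $\tau\wedge\omega^{n-1}=(n-1)!\,\Omega$. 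Therefore $d\omega^{n-1}=\tau\wedge\omega^{n-1}$ with $\tau$ closed, i.e. $g$ is LCB; being left-invariant it is $\Lambda$-invariant and descends to $X_{\Lambda,d}$, where the (local) LCB identity persists.

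The substantive point — and the only thing I would verify with real care — is this combinatorial reduction: that $d\omega_2$ is killed by $\wedge\,\omega^{n-2}$, and that after the reduction the two coefficients coincide, which comes down to the numerical identities $-(d-1)+d=1$ and $\tfrac{d-1}{2}+d+\tfrac{d(d-1)}{2}=n-1$. Both depend precisely on the multiplicities $d,\,d,\,d^2$ of the three families of generators $A_i$, $B_j$, $C_{ij}$ and on the chosen complex structure $\J$; everything else (the Maurer--Cartan equations, the Leibniz identity $d\omega^{n-1}=(n-1)\,\omega^{n-2}\wedge d\omega$, and the signs) is routine, and no estimates enter anywhere.
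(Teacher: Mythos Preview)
Your proof is correct and follows essentially the same strategy as the paper: the same left-invariant Hermitian metric (the basis $\{T,A_i,B_j,C_{ij}\}$ orthonormal), the same Maurer--Cartan computation of $d\omega$, and the same key combinatorial observation that monomials in $d\omega$ touching three distinct $\eta$-blocks are annihilated by $\wedge\,\omega^{n-2}$. Your decomposition $\omega=\omega_0+\omega_1+\omega_2$ packages the bookkeeping more cleanly than the paper's direct enumeration; note that you obtain Lee form $\theta=W_2^{\vee}$ whereas the paper writes $\theta=(2m+1)W_2^{\vee}$ --- the discrepancy comes from slips in the paper's displayed expression for $d\omega$ (it omits the terms $W_2^{\vee}\wedge X_{i0}^{\vee}\wedge Y_{i0}^{\vee}$ from $d(\gamma_{1t}\wedge\beta_t)$ and has the wrong coefficient on $W_2^{\vee}\wedge X_{0j}^{\vee}\wedge Y_{0j}^{\vee}$), but this does not affect the LCB conclusion.
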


\begin{proof}
Consider the left-invariant form 
$$\omega = W_1^{\vee}\wedge W_2^{\vee} + \sum_{\substack{\substack{\substack 0\leq i \leq 2m+1\\ 0\leq j \leq m\\(i,j)\neq(0,0)}}} X_{ij}^{\vee}\wedge Y_{ij}^{\vee}$$
 This form is positive definite and of type $(1,1)$, so it comes from a hermitian metric $h$. The metric $h$ defined this way is LCB. To prove this, we will first determine $d\omega$ using the fact that $d\omega(A,B,C)=-\omega([A,B],C)-\omega([C,A],B)-\omega([B,C],A)$, for any left-invariant vector fields $A,B,C$. For a general left-invariant $2$-form, we have:
$$d\omega(A_i,B_j,B_k)=-\omega([A_i,B_j],B_k)-\omega([B_k,A_i],B_j)=\omega(C_{ij},B_k)-\omega(C_{ik},B_j)\ \ ;$$
$$d\omega(A_i,A_j,B_k)=-\omega([B_k,A_i],A_j)-\omega([A_j,B_k],A_i)=-\omega(C_{ik},A_j)+\omega(C_{jk},A_i)\ \ ;$$
$$d\omega(T,A_i,A_j)=-\omega([T,A_i],A_j)-\omega([A_j,T],A_i)=2\omega(A_j,A_i)\ \ ;$$
$$d\omega(T,B_i,B_j)=-\omega([T,B_i],B_j)-\omega([B_j,T],B_i)=-2\omega(B_j,B_i)\ \ ;$$
$$d\omega(T,A_i,B_j)=-\omega([T,A_i],B_j)-\omega([B_j,T],A_i)-\omega([A_i,B_j],T)=$$
$$=-\omega(A_i,B_j)-\omega(B_j,A_i)-\omega(-C_{ij},T)=\omega(C_{ij},T)\ \ ;$$
$$d\omega(A_i, B_j, C_{st})=-\omega([A_i,B_j],C_{st})=\omega(C_{ij},C_{st})\ \ ;$$
$$d\omega(T, A_i, C_{st})=-\omega([T,A_i],C_{st})=-\omega(A_i,C_{st})\ \ ;$$
$$d\omega(T, B_j, C_{st})=-\omega([T,B_j],C_{st})=\omega(B_j,C_{st}).$$
All the remaining possible combinations are zero directly by the commutator relations. Applying these calculations to the form $\omega$ defined above, we conclude that 
$$d\omega=\sum_{\substack{1\leq i \leq 2m+1\\ 1\leq j \leq m}} X_{0j}^{\vee}\wedge Y_{i0}^{\vee}\wedge Y_{ij}^{\vee}-\sum_{\substack{1\leq i \leq 2m+1\\ 1\leq j \leq m}} Y_{0j}^{\vee}\wedge Y_{i0}^{\vee}\wedge X_{ij}^{\vee}+\sum_{1\leq j \leq m} W_2^{\vee}\wedge X_{0j}^{\vee}\wedge Y_{0j}^{\vee}$$

 Then, $d\omega^{n-1}=(n-1)d\omega\wedge\omega^{n-2}=(n-1)\left(\displaystyle\sum_{1\leq j \leq m} W_2^{\vee}\wedge X_{0j}^{\vee}\wedge Y_{0j}^{\vee}\right)\wedge \omega^{n-2}$, since $X_{0j}, Y_{i0}$ and $Y_{ij}$ come from three different $(X, Y)$ pairs  (and so do $Y_{0j}, Y_{i0}$ and $X_{ij}$) and thus the first two terms of $d\omega$ vanish when multiplied with $\omega^{n-2}$. Finally, we obtain
$$(n-1)d\omega\wedge\omega^{n-2}=  (n-1)(n-2)!(2m+1) W_2^{\vee}\wedge \bigwedge_{\substack{\substack{\substack 0\leq i \leq 2m+1\\ 0\leq j \leq m\\(i,j)\neq(0,0)}}} X_{ij}^{\vee}\wedge Y_{ij}^{\vee}=$$
$$=(2m+1) W_2^{\vee}\wedge \omega^{n-1}$$ 
The form $\theta=(2m+1) W_2^{\vee}$ is closed because $W_2$ cannot be obtained as the bracket of two other left-invariant vector fields. 
\end{proof}

\end{document}